\documentclass[11pt]{amsart} 
\input{header.sty} 

\begin{document}

\title{On Quasi-$F$-Purity of Excellent Rings}
\author{Vignesh Jagathese}

\address{Department of Mathematics, Statistics, and Computer Science, University of Illinois at Chicago, Chicago, IL, USA}
\email{vjagat2@uic.edu}
\keywords{Quasi-F-Splittings, Quasi-F-Purity, Algebraic Geometry, Commutative Algebra, Positive Characteristic, Excellent Rings}

\maketitle

\begin{abstract}
 We introduce an analogue to  Quasi-$F$-splittings, Quasi-$F$-purity, which is definable over rings that are not necessarily $F$-finite. We show that this property is equivalent to being Quasi-$F$-split in the complete local and $F$-finite case. We then exhibit that it is stable under completion, direct limit, and local/finite \etale extension. 
\end{abstract}

\section{Introduction}

Introduced in \cite{Yobuko_QuasiFrobeniusSplittingandliftingofCYVarsInCharp}, the quasi-$F$-split height of a characteristic $p$ variety $X/k$ has proven to be a useful and powerful arithmetic and geometric invariant. Though a significantly weaker condition than being Frobenius-split, quasi-$F$-split varieties have been shown to inherit some of the desirable properties of $F$-split varieties, namely $W_2(k)$-liftability \cite{Yobuko_OnTheFrobeniusSplittingHeightofVarsinPositiveChar}, Kodaira Vanishing, and Decomposition of Hodge De Rham \cite{Petrov_DecompositionofDeRhamComplexForQuasiFSplitVarieties}. The Quasi-$F$-split height recovers numerous arithmetic invariants such as the Artin-Mazur height for Calabi-Yau varieties \cite{Yobuko_OnTheFrobeniusSplittingHeightofVarsinPositiveChar}, the $a$-number and $b$-number in specific cases \cite{VanDerGeerKatsura_RelationsBetweenSomeInvariantsofAlgVarsinPosChar}, as well as the order of vanishing of the Hasse Invariant \cite{VanDerGeerKatsura_Aninvariantforvarietiesinposchar}. Just as classical $F$-splittings can be easily detected via Fedder's Criterion \cite{FeddersCrit}, in the case of complete intersections over $F$-finite fields authors in \cite{kawakami2022fedder} (cf. \cite{yoshikawa2025feddertypecriterionquasifesplittingquasifregularity}) have derived a Fedders-style-criterion for computing the quasi-$F$-split height. In an ongoing series of papers \cite{Kawakami++_2022quasifsplittingsbirationalgeometry}, \cite{Kawakami++_2022quasifsplittingsbirationalgeometryII}. and \cite{Kawakami++_2022quasifsplittingsbirationalgeometryIII}, the Quasi-$F$-split height has proven vital in the study of the birational geometry of varieties over perfect fields of positive characteristic, and has also led to more refined invariants such as Quasi-$F^e$-splitting, (uniform) Quasi-$F^\infty$ splitting, Quasi-$F$-regularity and Quasi-$+$-regularity in \cite{tanaka2024quasifesplittings}. \\

Though much has been achieved in the study of quasi-$F$-splittings, less is known when working over varieties over a field $k$ that is not perfect, and even less when $k$ need not be $F$-finite. As seen in lemma $\ref{lem: Witt Noeth iff FFinite}$, being $F$-finite is a sufficient and necessary condition for $W_n(R)$ to be Noetherian, adding an additional and significant layer of difficulty when passing to the non-$F$-finite setting. Just as $F$-purity is a more natural notion than $F$-splitting in the non-$F$-finite setting, we define quasi-$F$-purity and show that it is equivalent to quasi-$F$-splitting over complete local and $F$-finite rings (theorem \ref{Thm: QFSplitIffQFPure}). This is an analogous relationship to $F$-splitting and $F$-purity. \\

We recall that a ring $R$ is a \newword{G-ring} (or Grothendieck ring) if $R_P \to \wh{R_P}$ is geometrically regular for every prime $P$. This is one of requirements for $R$ to be an excellent ring. We prove the following facts about the quasi-$F$-pure height of a local G-ring $R$, denoted $\height(R)$ (Definition \ref{def: QFPurity}).

\begin{theorem*}
Let $\loc$ be a local Noetherian G-ring of characteristic $p$. 
\begin{enumerate}[label = (\alph*)]
    \item  $\height(R) = \height(\wh{R})$. [\ref{cor: HeightCompletes}]
    \item If $R = \dirlim R_i$ is a direct limit of local rings where $\height(R_i) \leq n$ for all $i$ sufficiently large, then $\height(R) \leq n$ [\ref{Thm: DirectLimit}], with equality in special cases [\ref{rem: EqualityifPureLimit}]. 
    \item For any local \etale extension $\loc \to (S,\fn)$, $\height(R) = \height(S)$. [\ref{Thm: LocalEtale}]
    \item For any finite \etale extension $\loc \to S$, $\height(R) = \height(S)$.[\ref{Thm: FiniteEtale}]
    
\end{enumerate}
\end{theorem*}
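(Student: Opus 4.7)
The plan is to prove the four statements separately, using the characterization of $\height(R) \leq n$ as purity of a canonical $R$-linear map $\Phi_n^R : R \to Q_n(R)$ built functorially from the truncated Witt vectors $W_n(R)$ and the Frobenius. Throughout I would lean on two heuristics: base change of a pure map along a flat ring map is still pure, and purity descends along faithfully flat maps with sufficiently nice fibers.

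For (a), the forward implication $\height(R) \leq n \Rightarrow \height(\wh{R}) \leq n$ should follow once one verifies that the formation of $\Phi_n^R$ is compatible with the (faithfully flat) base change $R \to \wh{R}$, so that purity of $\Phi_n^R$ tensors up to purity of $\Phi_n^{\wh{R}}$. For the converse I would invoke the G-ring hypothesis, namely that $R \to \wh{R}$ is geometrically regular, to run a gamma-construction or N\'eron--Popescu style approximation and descend purity from $\wh{R}$ back to $R$. The main obstacle here is that when $R$ is not $F$-finite, $W_n$ does not commute with completion on the nose, so one must carefully compare $W_n(R) \tensor_R \wh{R}$ with $W_n(\wh{R})$ before invoking faithful flatness. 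This is precisely the kind of pathology that motivates working with quasi-$F$-purity rather than quasi-$F$-splittings in the first place.

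For (b), the decisive observation is that both $W_n$ and the Frobenius commute with filtered colimits, so $\Phi_n^R \cong \dirlim \Phi_n^{R_i}$. Because filtered colimits are exact and commute with tensor products, a filtered colimit of pure maps is still pure; this gives the inequality. The equality statement in the pure direct limit setting would follow by ensuring that the bound is strict at a cofinal family of stages.

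For (c) and (d), I would invoke the Langer--Zink/Borger compatibility that for \etale $R \to S$ the map $W_n(R) \to W_n(S)$ is \etale and $W_n(S) \isom W_n(R) \tensor_R S$, and that Frobenius is preserved. This yields $\Phi_n^S \isom \Phi_n^R \tensor_R S$, so flatness of $R \to S$ gives $\height(S) \leq \height(R)$, while faithful flatness (automatic for local \etale maps, and a separate check in the finite \etale setting) gives the reverse inequality. The main subtlety is ensuring that the \etale base change identification for Witt vectors genuinely identifies the Frobenius-twisted canonical maps and not merely the underlying modules; once that naturality is in place, parts (c) and (d) reduce to essentially the same argument, with only the scope of faithful flatness distinguishing them.
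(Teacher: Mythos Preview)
Your treatment of (b) matches the paper's. The substantive issues are in (a), and to a lesser extent in the phrasing of (c)/(d).

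\textbf{Part (a).} The directions are inverted relative to where the difficulty lies, and the proposed mechanism for the hard direction does not go through. The inequality $\height(R) \leq \height(\wh{R})$ (your ``converse'') is the easy one: $R \to \wh{R}$ is faithfully flat, hence pure, so the pure-subring lemma gives it immediately with no G-ring hypothesis and no Popescu. The hard direction is your ``forward'' implication $\height(R) \leq n \Rightarrow \height(\wh{R}) \leq n$, and your plan to obtain $\Phi_{\wh{R},n}$ as $\Phi_{R,n} \tensor_R \wh{R}$ fails precisely at the obstacle you name: outside the $F$-finite case $Q_{R,n}$ is not finite over $R$, so $Q_{R,n} \tensor_R \wh{R} \not\cong Q_{\wh{R},n}$, and there is no base-change identification to tensor up. The paper does not attempt such a comparison. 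Instead it uses Popescu on the \emph{ascent} side, but in a different way: it shows $W_n(R) \to W_n(\wh{R})$ is pure (factor a smooth map as polynomial extension followed by \'etale, observe $W_n$ of each piece is pure, then pass to the filtered colimit), hence $Q_{R,n} \to Q_{\wh{R},n}$ is pure. Then the injective-hull criterion with $E_R(\fK) = E_{\wh{R}}(\fK)$ gives
\[
\ker\bigl(E \to E \tensor_R Q_{R,n}\bigr)=0 \iff \ker\bigl(E \to E \tensor_R Q_{\wh{R},n}\bigr)=0 \iff \ker\bigl(E \to E \tensor_{\wh{R}} Q_{\wh{R},n}\bigr)=0,
\]
which is exactly $n$-quasi-$F$-purity of $\wh{R}$. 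So the G-ring hypothesis enters through purity of the Witt-vector map, not through any base-change formula.

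\textbf{Parts (c) and (d).} The assertion ``$W_n(S) \cong W_n(R) \tensor_R S$'' is not well-typed ($W_n(R)$ is not an $R$-algebra), but the statement you actually need, $Q_{S,n} \cong Q_{R,n} \tensor_R S$ and hence $\Phi_{S,n} \cong \Phi_{R,n} \tensor_R S$, does follow from the relative Witt Frobenius isomorphism $F_*W_n(R) \tensor_{W_n(R)} W_n(S) \cong F_*W_n(S)$ once you note that the $W_n(R)$-action on $Q_{R,n}$ factors through $R$. With that in hand your argument is correct and in fact more direct than the paper's: the paper instead passes to completions via (a), uses that quasi-$F$-pure equals quasi-$F$-split there, and tensors the splitting diagram over $W_n(R)$ with $W_n(S)$. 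Your route avoids the detour through (a) and the G-ring hypothesis for these parts; the paper's route has the virtue of working with actual splittings rather than purity, which makes the diagram chase concrete.
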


In particular, $(b)$ and $(d)$ imply that quasi-$F$-pure height is stable under arbitrary separable base change. The author stresses that the separability condition is vital. A counterexample \ref{ex: InsepExtnFails} shows that an inseparable base change of a $2$-quasi-$F$-pure variety need not be quasi-$F$-pure at all; this contrasts the classical notion of $F$-purity, which is stable under arbitrary base change \cite{FeddersCrit}. As an $F$-finite extension of a non-$F$-finite field is necessarily inseparable, one cannot easily base change to the $F$-finite setting. Nevertheless, a more measured approach via the $\Gamma$-construction introduced in \cite{HochsterHuneke94} may be a possible path forward in this regard. A conjecture in this direction will be discussed at the end of the paper. 

\subsection*{Acknowledgements}
The author would like to thank Shiji Lyu, Shravan Patankar, and Kevin Tucker for useful discussions and comments, and the referee for insightful suggestions and corrections. The author was supported in part through NSF RTG Grant DMS-2037569.

\section{Preliminaries}
\label{Witt Intro}
In this section we provide a brief overview of the Witt Vector construction with a view towards the non-$F$-finite setting. In particular, we show in lemma \ref{lem: Witt Noeth iff FFinite} that $W_n(R)$ is not Noetherian; this implies that when $W_n(R)$ is local, it may not admit a faithfully flat completion map. The latter part of this section is dedicated to proving a weaker statement, that $W_n(R) \to W_n(R)\complete$ is pure. In the next section, this fact is used to show that the quasi-$F$-pure property completes.  
\subsection*{Notation}
\begin{itemize}
    \item Unless otherwise stated, All rings $R$ and $S$ will always be Noetherian with identity and of characteristic $p > 0$. 
    \item When $R$ is local, $\fm$ will denote the maximal ideal, $\fK$ the residue field, and $E = E_R(\fK)$ the injective hull of the residue field. 
    \item $W(R)$ is the ring of $p$-typical Witt Vectors with coefficients in $R$, $W_n(R)$ is the ring of truncated Witt Vectors, and $\ov{W_n}(R)$ is the mod $p$ reduction of $W_n(R)$. See the following subsection for definitions of these constructions.
    \item $F: R \to F_*R$ is the Frobenius map $r \mapsto r^p$, where $F_*R$ is the restriction of scalars under Frobenius. A similar convention will be taken for the Witt Frobenius map on $W_n(R)$ and $\ov{W_n}(R)$.
\end{itemize}
\subsection{Witt Vectors}
 For the convenience of the reader we will include a brief review of the Witt Vector construction, though the author recommends \cite{Borger}, \cite[Appendix]{LangerZink}, \cite{DavisKedlaya}, and \cite{LenstraWittVectors} for a more thorough treatment. \\

 For any ring $R$ we define $W(R):= \{(r_0,r_1, \dots) \ | \ r_i \in R\}$ to be the ring of \newword{($p$-typical) Witt Vectors}. We will typically denote elements of $W(R)$ as $\alpha = (\alpha_0,\alpha_1, \dots)$. For any $r \in R$ the element $[r] := (r,0,\dots) \in W(R)$ denotes the \newword{lift} of $r$. Attached to $W(R)$ are addition and multiplication operations defined via universal Witt Polynomials $S_\bullet(\alpha,\beta)$ and $P_\bullet(\alpha,\beta)$ such that
 $$\alpha + \beta = (S_0(\alpha_0,\beta_0), S_1(\alpha_0,\alpha_1,\beta_0,\beta_1), \dots, S_n(\alpha_{\leq n}, \beta_{\leq n}),\dots )$$
 $$\alpha \cdot \beta = (P_0(\alpha_0,\beta_0), P_1(\alpha_0,\alpha_1,\beta_0,\beta_1), \dots, P_n(\alpha_{\leq n}, \beta_{\leq n}),\dots )$$
The construction of these Witt polynomials, at first glance, seems fairly nonstandard. Consider $S_i$ and $P_i$ for $i = 0$ and $1$:
$$S_0(\alpha_0,\beta_0) = \alpha_0 + \beta_0, \qquad S_1(\alpha_0,\alpha_1,\beta_0,\beta_1) = \alpha_1 + \beta_1 - \sum_{i=1}^{p-1}\frac{1}{p}\binom{p}{i}\alpha_0^i\beta_0^{p-i}$$
$$P_0(\alpha_0,\beta_0) = \alpha_0\beta_0, \qquad P_1(\alpha_0,\alpha_1,\beta_0,\beta_1) = \alpha_1\beta_0^p + \alpha_0^p\beta_1 + p\alpha_1\beta_1$$
As $\binom{p}{i}$ is $p$-divisible for all valid $i$, division by $p$ in formula of $S_1$ is purely formal. This paper will focus on Witt Vectors over rings of characteristic $p$; in this case the term $p\alpha_1\beta_1 = 0$ in $P_1(\alpha_{\leq 1}, \beta_{\leq 1})$ and similarly for the other $P_{i}$. We also note that $W(-)$ is functorial $-$ for any ring morphism $f: R \to S$ one can define a ring morphism $W(f): W(R) \to W(S)$ assigning $\alpha \mapsto (f(\alpha_0),f(\alpha_1), \dots)$. The ring of Witt Vectors over a ring of characteristic $p$ has the following associated maps:
\begin{itemize}
    \item Frobenius: $F: W(R) \to W(R)$ is simply the image of $F$ under the functor $W(-)$ assigning $F(\alpha) = (\alpha_0^p,\alpha_1^p,\dots)$. The author stresses that the Witt Frobenius is not the same as the 'standard' Frobenius $\alpha \mapsto \alpha^p$, which is not a ring map on $W(R)$.
    \item Verschiebung: $V: W(R) \to W(R)$ assigning $(\alpha_0,\alpha_1, \dots) \mapsto (0,\alpha_0,\alpha_1, \dots)$. 
\end{itemize}
We define $W_n(R) := W(R)/\im(V^n)$ to be the ring of \newword{$n$-truncated $p$-typical Witt Vectors}, with similar additional and multiplication operations defined by Witt polynomials $S_i,P_i: W_i(R) \times W_i(R) \to R$ for $i < n$. Based on the definition of $S_0$ and $P_0$, it is clear that $W_1(R) \isom R$. This construction is naturally also functorial, and has maps $F: W_n(R) \to W_n(R)$ and $V: W_{n}(R) \to W_{n+1}(R)$ defined similarly to the above, with the additional restriction map $\fR: W_{n+1}(R) \to W_{n}(R)$ assigning $(\alpha_0, \dots, \alpha_n) \mapsto (\alpha_0, \dots, \alpha_{n-1})$. It's worth noting that $F$ and $\fR$ are ring homomorphisms while $V$ is additive but not multiplicative. \\

We note $p \in W_n(R)$ is of the form $(0,1,0, \dots 0)$ and the map $p: \alpha \mapsto p \cdot \alpha$ yields the identity $p = FV = VF$. Let $\ov{W_n}(R) := W_n(R)/p$ be the mod $p$ reduction of $W_n(R)$; when $R$ is perfect (i.e. $F$ is an isomorphism on $R$) one sees that $\im(V) \isom \im(p: \alpha \mapsto p\cdot \alpha)$. This implies that $\ov{W_n}(R) \isom R$, though this isomorphism does not hold in general.  \\

$F_*\ov{W_n}(R)$ is an $R$-module via the action $r \cdot F_*\ov{\alpha} := F_*\ov{\left([r^p] \cdot \alpha\right)}$, and is a $W_n(R)$-Module in the expected way. $R$ is also a $W_n(R)$ module via the restriction map $W_n(R) \xto{\fR^{n-1}} R$. While there certainly can be others, we will assume this $W_n(R)$-module structure on $R$ unless otherwise stated. \\

In the characteristic $p$ setting, Witt Vectors are primarily studied in the case where the underlying ring $R$ is $F$-finite. This is in part due to the following hurdle:
\begin{lemma}
    \label{lem: Witt Noeth iff FFinite}
   For $n > 1$, $W_n(R)$ is Noetherian if and only if $R$ is $F$-finite.
\end{lemma}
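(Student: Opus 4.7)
The plan is to exploit the Verschiebung filtration
\[ W_n(R) \supseteq VW_n(R) \supseteq V^2 W_n(R) \supseteq \cdots \supseteq V^n W_n(R) = 0, \]
and to identify the successive quotients with Frobenius pushforwards. First I would verify that each $V^i W_n(R)$ is an ideal of $W_n(R)$ using the projection formula $x \cdot V(y) = V(F(x) y)$, which inductively yields $V^i W_n(R) \cdot V^j W_n(R) \subseteq V^{i+j} W_n(R)$. In particular $VW_n(R)$ annihilates each quotient $V^i W_n(R)/V^{i+1} W_n(R)$, so the $W_n(R)$-module structure on each graded piece descends to an $R$-module structure via the restriction ring map $\fR^{n-1}: W_n(R) \to R$. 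The computation $[r] \cdot V^i[s] = V^i([r^{p^i} s])$ on Teichm\"uller lifts then shows that, under the natural additive identification $V^i W_n(R)/V^{i+1} W_n(R) \cong R$ extracting the $i$-th coordinate, the induced $R$-action sends $(r, s) \mapsto r^{p^i} s$; that is, the $i$-th graded piece is canonically isomorphic to $F^i_* R$.

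For the forward direction, assume $R$ is $F$-finite, so each $F^i_* R$ is a finitely generated, hence Noetherian, $R$-module. I would induct on $i$ to show that $W_n(R)/V^i W_n(R)$ is a Noetherian $W_n(R)$-module via the short exact sequences
\[ 0 \to V^i W_n(R)/V^{i+1} W_n(R) \to W_n(R)/V^{i+1} W_n(R) \to W_n(R)/V^i W_n(R) \to 0; \]
the leftmost term is annihilated by $VW_n(R)$ and Noetherian over $R$ by the identification above, hence Noetherian over $W_n(R)$, and extensions of Noetherian modules are Noetherian. Setting $i = n$ gives that $W_n(R)$ is Noetherian as a module over itself, hence as a ring.

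For the converse, assume $W_n(R)$ is Noetherian with $n > 1$. Then the ideal $VW_n(R)$ is finitely generated, so its quotient $VW_n(R)/V^2 W_n(R) \cong F_* R$ is finitely generated as a $W_n(R)$-module and, since the action factors through $R$, as an $R$-module, which proves $R$ is $F$-finite. The main technical obstacle is cleanly setting up the filtration structure — especially verifying that the $W_n(R)$-action on $V^i W_n(R)/V^{i+1} W_n(R)$ descends as claimed and that the resulting $R$-module is precisely $F^i_* R$; once this is in hand, both implications follow by standard d\'evissage.
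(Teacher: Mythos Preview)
Your proof is correct and follows the same core strategy as the paper: exploit the Verschiebung filtration and identify graded pieces with Frobenius pushforwards of $R$. The differences are minor but worth noting. For the forward direction, the paper simply cites \cite[Proposition A.4]{LangerZink}, whereas you give a self-contained d\'evissage argument along the filtration; your version is more informative. For the converse, the paper looks at the \emph{bottom} piece $\ker(\fR) = V^{n-1}W_n(R) \cong F^{n-1}_*R$ and argues that if $R$ is not $F$-finite then $R$ is not finitely generated over $R^{p^{n-1}}$, so this ideal is not finitely generated. You instead look at the \emph{first} quotient $VW_n(R)/V^2W_n(R) \cong F_*R$, which yields $F$-finiteness directly without the intermediate step of deducing finiteness over $R^p$ from finiteness over $R^{p^{n-1}}$. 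Both routes work; yours is marginally cleaner.
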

\begin{proof}
    It is well known that when $R$ is $F$-finite,  $W_n(R)$ is Noetherian \cite[Proposition A.4]{LangerZink}. For the converse, consider the following short exact sequence:
    $$0 \to \ker(\fR) \to W_n(R) \xto{\fR} W_{n-1}(R) \to 0$$
    Where $\fR$ is the restriction map on Witt Vectors. $\ker(\fR) = \{(0, \dots, 0, r) \in W_n(R) \ | \ r \in R\}$ with a $W_n(R)$-module action $\alpha \cdot V^{n-1}([r]) = V^{n-1}\left(\left[\alpha_{0}^{p^{n-1}}r\right]\right)$. When $R$ is not $F$-finite, $R$ is not a finitely generated $R^{p^{n-1}}$-Module for $n > 1$. It follows then that the ideal $\ker(\fR) \subset W_n(R)$ is not finitely generated and hence, $W_n(R)$ is not Noetherian. 
\end{proof}

\subsection{Purity of Completion of $W_n(R)$}

First we collect a number of facts about $W_n(R)$ from literature.
 \begin{enumerate}[label = (\alph*)]
 \label{lem: facts}
        \item If $R$ is an $F$-finite Noetherian ring, then $W_n(R)$ is a Noetherian ring with finite Witt Frobenius map $F: W_n(R) \to F_*W_n(R)$. \cite[Lemma 2.5]{kawakami2022fedder}.
        \item  If $\loc$ is a local ring, then $W_n(R)$ is a local ring. We reference \cite[Proposition 2.6]{kawakami2022fedder} though the authors' proof also holds in the non-$F$-finite setting. We let $\fm_{W_n(R)}$ denote the maximal ideal of $W_n(R)$ in this case, and provide an explicit presentation of this ideal in the following lemma. 
        \item  If $R \to S$ is an \etale extension of Noetherian rings, then $W_n(R) \to W_n(S)$ is an \etale extension of rings. This was first shown in \cite{KallenWittVectors}, though a more general result can be found in \cite{Borger}.
    \end{enumerate}

For a local ring $\loc$, it makes sense to consider the $\fm_{W_n(R)}$-adic completion of $W_n(R)$. Outside of the $F$-finite setting, lemma \ref{lem: Witt Noeth iff FFinite} ensures that $W_n(R)$ is not Noetherian. Completions of non-Noetherian rings are generally poorly behaved \cite[05JF]{stacks-project}, though fortunately the following lemma ensures that $W_n(R)\complete$ has a well-defined structure.

\begin{lemma}
    Let $\loc$ be a local ring. Then $W_n(\wh{R}) \isom W_n(R)\complete$. In other words, The $\fm_{W_n(R)}$-adic completion of $W_n(R)$ is ring-isomorphic to the image of the $\fm$-adic completion of $R$ under $W_n(-)$.  
    \label{lem: EquivCompletions}
\end{lemma}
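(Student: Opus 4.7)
The plan is to identify both $W_n(\wh{R})$ and $W_n(R)\complete$ with the common inverse limit $\invlim_k W_n(R/\fm^k)$. As a functor, $W_n$ has underlying-set functor $R \mapsto R^n$ with ring operations given by universal polynomials, so $W_n$ preserves arbitrary inverse limits. Hence
\[ W_n(\wh{R}) \isom W_n\bigl( \invlim\nolimits_k R/\fm^k \bigr) \isom \invlim\nolimits_k W_n(R/\fm^k). \]

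On the other side, the surjection $R \onto R/\fm^k$ induces by functoriality a surjection $W_n(R) \onto W_n(R/\fm^k)$ with kernel $I_k := \{\alpha \in W_n(R) : \alpha_j \in \fm^k \text{ for all } 0 \le j \le n-1\}$. Thus $\invlim_k W_n(R)/I_k \isom W_n(\wh{R})$, and since $W_n(R)\complete = \invlim_k W_n(R)/\fm_{W_n(R)}^k$ by definition, the lemma reduces to proving cofinality of the filtrations $\{I_k\}$ and $\{\fm_{W_n(R)}^k\}$ of $W_n(R)$.

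The first cofinality direction $\fm_{W_n(R)}^N \subseteq I_{N-n+1}$ (for $N \ge n-1$) should follow by writing each factor in $\fm_{W_n(R)}$ as $[\alpha_0] + V(\gamma)$ with $\alpha_0 \in \fm$, and expanding an $N$-fold product using the identities $[m]\cdot \delta = (m\delta_0, m^p \delta_1, \ldots, m^{p^{n-1}} \delta_{n-1})$ and $V^a(\gamma) V^b(\delta) = V^{a+b}(F^b(\gamma) F^a(\delta))$. Terms with $n$ or more $V$-factors vanish (since $V^n = 0$), and every surviving term of the form $V^j(\cdots)$ with $j \le n-1$ carries $N - j \ge N - n + 1$ Teichm\"uller factors whose coordinates are driven into $\fm^{N-n+1}$ by the first identity.

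The main obstacle is the reverse direction $I_{k'} \subseteq \fm_{W_n(R)}^k$ for $k'$ sufficiently large. The approach is to use the Teichm\"uller expansion $\alpha = \sum_{i=0}^{n-1} V^i([\beta_i])$, noting that each $\beta_i$ is a polynomial with integer coefficients and no constant term in $\alpha_0, \ldots, \alpha_i$, hence $\beta_i \in \fm^{k'}$ whenever $\alpha \in I_{k'}$. The difficulty is that $V^i([\beta])$ does not factor directly into $k$ elements of $\fm_{W_n(R)}$ when $\beta$ lacks a $p^i$-th root, since $V^i(1) \cdot [\gamma] = V^i([\gamma^{p^i}])$ rather than $V^i([\gamma])$. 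The workaround exploits Noetherianness to fix a finite generating set $\fm = (m_1, \ldots, m_r)$; then for $k'$ large in terms of $k$, $n$, $r$, pigeonhole forces every monomial of $\fm^{k'}$ in the $m_s$ to contain some $m_s^{p^i}$ as a factor, enabling the identity $V^i([m_s^{p^i}\gamma]) = [m_s] \cdot V^i([\gamma])$ and an inductive reduction on $k$. The failure of Teichm\"uller additivity, $[r+s] - [r] - [s] \in V(W_{n-1}(R))$, produces correction terms at higher $V$-levels that are absorbed by induction on $i$ (or on $n$), completing the cofinality argument and with it the lemma.
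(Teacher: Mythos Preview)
Your proposal is correct and follows the same overall strategy as the paper: identify both sides with $\invlim_k W_n(R/\fm^k)$ and verify cofinality of the filtrations $\{I_k\}$ (the paper writes this as $W_n(\fm^k)$) and $\{\fm_{W_n(R)}^k\}$. Your treatment of the inclusion $\fm_{W_n(R)}^N \subseteq I_{N-n+1}$ via the decomposition $[\alpha_0] + V(\gamma)$ and the identities for $[m]\cdot\delta$ and $V^a(\gamma)V^b(\delta)$ is a more explicit version of the paper's inductive computation $J^t \subseteq \{(a_0,\dots,a_{n-1}) : a_i \in \fm^{t-i}\}$.

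Where you diverge is in the reverse inclusion $I_{k'} \subseteq \fm_{W_n(R)}^k$. The paper dispatches this in one line (``by similar logic''), whereas you correctly flag it as the harder direction: the obstruction is that $V^i([\beta])$ does not peel off a factor in $\fm_{W_n(R)}$ unless $\beta$ has a $p^i$-th power factor. Your workaround---fix generators $\fm=(m_1,\dots,m_r)$, use pigeonhole on monomials to extract an $m_s^{p^i}$ factor, apply $V^i([m_s^{p^i}\gamma]) = [m_s]\cdot V^i([\gamma])$, and absorb the Teichm\"uller-additivity defects $[r+s]-[r]-[s]\in VW_{n-1}(R)$ by induction on the $V$-level---is a sound and complete argument that genuinely uses Noetherianness of $R$. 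This is more than the paper supplies, and is the right way to fill what is otherwise a gap in the paper's presentation.
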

\begin{proof}
     Consider the ring morphism $\varphi: W_n(R) \xto{\fR^{n-1}} R \onto R/\fm$. This is surjective, with kernel $J := \left\{(a_0, \dots, a_{n-1}) \in W_n(R) \ | \ a_0 \in \fm\right\}$. As $J$ is the kernel of a surjection onto a field, it is a maximal ideal of $W_n(R)$. As $R$ is local, $W_n(R)$ is local and $J =\fm_{W_n(R)}$. Now consider the ideal
    $$W_n(\fm^k) := \ker(W_n(R) \onto W_n(R/\fm^k)) = \{(a_0, \dots, a_{n-1}) \in W_n(R) \ | \ a_i \in \fm^k \ \forall i\}$$
    One can see that $W_n(\wh{R}) \isom \invlim \frac{W_n(R)}{W_n(\fm^k)}$, so it is sufficient to show that the following two directed systems of $W_n(R)$-modules are cofinal:
    $$\{W_n(\fm^k))\}_k \qquad, \qquad \{J^k\}_k$$
   Fix $k$. We want to find an $\ell$ such that $J^\ell \subseteq W_n(\fm^k)$. Notice that
    $$J^2 \subseteq \{(a_0, \dots, a_{n-1}) \in W_n(R) \ | \ a_0 \in \fm^2, a_1 \in \fm\}$$
    And further
    $$J^t \subseteq \{(a_0, \dots, a_{n-1}) \in W_n(R) \ | \ a_0 \in \fm^t, a_1 \in \fm^{t-1}, \dots, a_{n-1} \in \fm^{t- n + 1}\}$$
    Thus to make sure every term $a_i \in \fm^k$, we just need to $\ell$ such that $k = \ell - n + 1$. Thus $\ell = k + n - 1$ works. Conversely, for a fixed $k$ we want to find an $\ell$ such that $W_n(\fm^\ell) \subseteq J^k$. By similar logic, all $(a_0, \dots, a_{n-1}) \in W_n(\fm^\ell)$  satisfy $a_i \in \fm^\ell$, and thus are contained in $J^k$ for sufficiently large $k$ dependent only on $\ell$ and $n$. The result follows.
\end{proof}
The proof of quasi-$F$-split height completing in the $F$-finite case \cite[Proposition 2.17]{kawakami2022fedder} heavily relies on the fact that $W_n(R)$ is Noetherian, and hence, $W_n(R) \to W_n(R)\complete$ is faithfully flat. In the non-$F$-finite setting such an argument fails. We instead prove a much weaker statement: that $W_n(R) \to W_n(R)\complete$ is a pure map of $W_n(R)$-Modules. We then use that to show $\height(R) = \height(\wh{R})$. \\

We recall for the reader that a map of $R$-modules $M \to N$ is \label{def: pure}\newword{pure} if for any $R$-Module $L$, $L \tensor_{R} M \to L \tensor_{R} N$ is injective. it is clear that split maps and faithfully flat maps are pure; in addition when $\locr$ is a Noetherian local ring, $M \to N$ is pure if and only if $E \tensor_R M \to E \tensor_R N$ is injective. As injectivity is a local property, so is purity. Finally, we say that $R$ is $F$-pure if $F:R \to F_*R$ is a pure map of $R$-modules. \\

Before we proceed with verifying $W_n(R) \to W_n(\wh{R})$ is pure, however, we will need a slightly weaker though nonetheless interesting result:

\begin{proposition} 
\label{lem: WnSmoothToPure} 
    $W_n(-)$ takes smooth extensions of Noetherian rings to pure extensions of rings. 
\end{proposition}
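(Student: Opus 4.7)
My plan is to use the local structure theorem for smooth morphisms to reduce the proposition to two manageable cases: polynomial extensions and \'etale extensions, each of which interacts well with $W_n(-)$.

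Zariski-locally on $\Spec S$, any smooth extension $R \to S$ factors as $R \to R[x_1, \ldots, x_d] \to S_g$, where $S_g$ is a principal localization of $S$ and the second map is \'etale. By quasi-compactness, finitely many such localizations $\{S_{g_i}\}$ yield a cover of $\Spec S$; and since smooth extensions are faithfully flat, the induced map $\bigsqcup_i \Spec S_{g_i} \to \Spec R$ is surjective. For each factor I would then analyze $W_n(-)$ as follows. The polynomial extension $R \into R[x_1, \ldots, x_d]$ admits the retraction $x_j \mapsto 0$; applying the functor $W_n(-)$ yields a retraction $W_n(R[x_1, \ldots, x_d]) \to W_n(R)$, so $W_n(R) \into W_n(R[x_1, \ldots, x_d])$ is a split ring map, and in particular pure. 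By fact (c), each \'etale extension $R[x_1, \ldots, x_d] \to S_{g_i}$ yields an \'etale, hence flat, extension $W_n(R[x_1, \ldots, x_d]) \to W_n(S_{g_i})$.

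To globalize, the essential geometric input is that $\Spec W_n(A) \cong \Spec A$ topologically for any ring $A$, since $V W_n(A)$, the kernel of $W_n(A) \onto A$, is a nilpotent ideal. Consequently, the surjectivity $\bigsqcup_i \Spec S_{g_i} \onto \Spec R$ lifts to $\bigsqcup_i \Spec W_n(S_{g_i}) \onto \Spec W_n(R)$. Combined with flatness of $W_n(R) \to \prod_i W_n(S_{g_i})$ --- which follows from the split polynomial stage and \'etale flatness of the second stage --- one obtains faithful flatness of $W_n(R) \to \prod_i W_n(S_{g_i})$, and hence purity. Since the canonical map $W_n(S) \to \prod_i W_n(S_{g_i})$ is itself faithfully flat (being the $W_n$-image of a faithfully flat \'etale cover, via fact (c) together with the same spectral identification), purity of $W_n(R) \to W_n(S)$ then descends along this faithfully flat extension.

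The main obstacle is verifying flatness of $W_n(R) \to W_n(R[x_1, \ldots, x_d])$, as $W_n$ is not transparent with respect to polynomial extensions: the splitting yields purity, but not automatically flatness. The expected route is through the $V$-adic filtration on $W_n(R[x])$, analyzing the associated graded pieces as Frobenius-twisted $R[x]$-modules and applying the local flatness criterion. Once this is in hand, the combination with fact (c) and the topological identification $\Spec W_n(-) \cong \Spec(-)$ delivers the desired purity.
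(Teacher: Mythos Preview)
Your proposal contains a fatal flaw: the map $W_n(R) \to W_n(R[x_1, \ldots, x_d])$ is \emph{not} flat when $n > 1$ and $d > 0$, so the ``expected route'' through the $V$-adic filtration and the local flatness criterion cannot succeed. For a concrete instance take $R = \FF_p$, $n = 2$, $d = 1$: then $W_2(\FF_p) = \ZZ/p^2$, and since $p \cdot (a_0,a_1) = (0,a_0^p)$ in $W_2$ one computes directly
\[
\Tor_1^{\ZZ/p^2}\bigl(\FF_p,\, W_2(\FF_p[x])\bigr) \;\cong\; \ker(p)\big/\im(p) \;\cong\; \FF_p[x]\big/\FF_p[x^p] \;\neq\; 0.
\]
Your entire globalization scheme---assembling faithful flatness of $W_n(R) \to \prod_i W_n(S_{g_i})$ from flatness of the polynomial stage and of the \'etale stage---rests on this, so the argument collapses at precisely the step you flagged as the main obstacle. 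Indeed, the paper remarks immediately after this proposition that $W_n(\varphi)$ is never flat when $\varphi$ is smooth of positive relative dimension.

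The paper avoids this by never asking for flatness at the polynomial stage. It first localizes (purity being checkable locally), reducing to a smooth extension of \emph{local} rings, and then invokes the same factorization $R \to R[x_1, \ldots, x_d] \xrightarrow{\pi} S$ you describe. The splitting of $W_n(R) \to W_n(R[x_1, \ldots, x_d])$ already gives purity---no flatness needed---and $W_n(\pi)$ is \'etale and \emph{local}, hence faithfully flat, hence pure; composing two pure maps finishes. You had both ingredients in hand but detoured through an unattainable flatness statement; drop the flatness claim, localize first so that the \'etale stage becomes faithfully flat, and compose purities instead.
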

\begin{proof}

    Both smoothness and purity are local properties, so we can assume our extensions are local extensions. Let $\varphi: R \to S$ be a smooth extension of Noetherian local rings. Via \cite[054L]{stacks-project} we see that we can factor this morphism through $R[x_1, \dots, x_d]$ for $d$ the relative dimension of $S$ over $R$:
    \begin{center}
\begin{tikzcd}
R \arrow[r] \arrow[rd, "\varphi"'] & {R[x_1, \dots, x_d]} \arrow[d, "\pi"] \\
                                  & S                                    
\end{tikzcd}
    \end{center}
    Where $\pi$ is \etale. Passing this diagram through $W_n(-)$ yields
    \begin{center}
\begin{tikzcd}
W_n(R) \arrow[r] \arrow[rd, "W_n(\varphi)"'] & {W_n(R[x_1, \dots, x_d])} \arrow[d, "W_n(\pi)"] \\
                                            & W_n(S)                                         
\end{tikzcd}
    \end{center}
    $R \to R[x_1, \dots, x_d]$ splits in the category of rings, so via functionality of $W_n(-)$ we can conclude that $W_n(R) \to W_n(R[x_1, \dots, x_d])$ also splits as a ring homomorphism. Thus, it splits as a morphism of $W_n(R)$-Modules, and is hence pure. $W_n(\pi)$ is \etale and hence is flat. As $W_n(\pi)$ is a local morphism of local rings, this is equivalent to faithful flatness, and all faithfully flat maps are pure. It follows that $W_n(\varphi)$ is a composition of pure maps, and hence it is pure.  
\end{proof}

The author stresses that purity is a much weaker condition than flatness. Indeed, $W_n(\varphi)$ as above is not flat outside of the case where $\varphi$ is \etale.

\begin{remark}
     When $d > 0$ as above (i.e. $\varphi$ is smooth but not \etale ) then $W_n(\varphi)$ is pure but necessarily not flat (nor even faithful). 
\end{remark}
\begin{proof}
    $W_n(R) \to W_n(R[x_1, \dots, x_d])$ is not flat for any choice of $n > 1, d > 0$, and thus 
    $$\Tor_{W_n(R)}^1(W_n(R[x_1, \dots, x_d]), M) \neq 0$$
    for some $W_n(R)$-module $M$. As $W_n(\pi)$ is \etale, it is pure, so by \cite{HochsterRoberts} we have an embedding of Tors 
$$0 \neq \Tor_{W_n(R)}^1(W_n(R[x_1, \dots, x_d]), M) \into \Tor_{W_n(R)}^1(W_n(S), M)$$
implying that $W_n(S)$ is not a flat $W_n(R)$ module. 
\end{proof}

It can be readily seen that any smooth extension $R \to S$ is geometrically regular; even a direct limit of smooth extensions $\dirlim S_i$ is geometrically regular. In \cite{NeronPopescu},Popescu proved the remarkable fact that the converse also holds; i.e. any geometrically regular extension of Noetherian rings $R \to S$ can be realized as a filtered colimit of smooth extensions $R \to S_i$ where $\dirlim S_i = S$. We will cite this to extend proposition \ref{lem: WnSmoothToPure} to geometrically regular extensions, but first we prove an intermediate lemma:
\begin{lemma}
    \label{lem: WittVectorCommutesDirectLimit}
    Let $R = \dirlim R_i$ be a direct limit in the category of rings. Then $W_n(R) \isom \dirlim W_n(R_i)$ as rings, i.e. $W_n(-)$ commutes with direct limits. 
\end{lemma}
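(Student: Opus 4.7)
The plan is to exploit the fact that $W_n(R)$, as a set, is just $R^n$, with ring operations given by fixed universal polynomials (the Witt polynomials $S_i, P_i$) whose coefficients are integers and in particular are independent of $R$. Since the ring structure on $W_n$ is defined by polynomial operations in the coordinates that are natural in $R$, the assignment $R \mapsto W_n(R)$ is a functor; hence the structure maps $R_i \to \dirlim R_j$ induce ring maps $W_n(R_i) \to W_n(\dirlim R_j)$ compatible with the transition morphisms in the directed system. By the universal property of the direct limit in the category of rings, these assemble into a canonical ring homomorphism
\[
\varphi: \dirlim W_n(R_i) \longrightarrow W_n\!\left(\dirlim R_i\right).
\]

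The first step is to observe that $\varphi$ is a ring homomorphism essentially by construction: each map $W_n(R_i) \to W_n(\dirlim R_j)$ preserves addition and multiplication (as these are defined by the same universal Witt polynomials on both sides), and the colimit inherits this compatibility.

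The second step is to check that $\varphi$ is bijective on underlying sets. Here one uses that the forgetful functor from rings to sets preserves filtered colimits, so $\dirlim W_n(R_i)$ has underlying set equal to the filtered colimit $\dirlim R_i^n$ in $\mathbf{Set}$. Since filtered colimits commute with finite products in $\mathbf{Set}$, we obtain a bijection $\dirlim R_i^n \xrightarrow{\sim} \left(\dirlim R_i\right)^n$, which is precisely the underlying-set map of $\varphi$. Combined with the first step, $\varphi$ is both a set bijection and a ring homomorphism, hence a ring isomorphism.

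I do not expect any serious obstacle here: the entire content amounts to the observation that $W_n$ is defined coordinate-wise by universal polynomials, so it commutes with any colimit that is already computed on underlying sets (which includes filtered colimits of rings). The only subtle point worth emphasizing in the writeup is that filtered colimits in $\mathbf{Ring}$ and in $\mathbf{Set}$ agree, which is why one gets the bijection in the second step without extra work. Note this argument would fail for arbitrary (non-filtered) colimits, but that case is not needed for the applications that follow.
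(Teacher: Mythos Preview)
Your argument is correct and is essentially the same as the paper's: both rest on the observation that $W_n(R)$ is $R^n$ with ring operations given by universal Witt polynomials, so that $W_n(\dirlim R_i)$ visibly satisfies the universal property of $\dirlim W_n(R_i)$. The paper states this in one sentence without unpacking the set-theoretic step (filtered colimits of rings are computed on underlying sets and commute with finite products), whereas you spell it out.
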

\begin{proof}
    As $W_n(R_i)$ consists of finite length vectors with coefficients in $R_i$, it can be easily deduced that $W_n(\dirlim R_i)$ satisfies the universal property of colimits with respect to the directed system $\{W_n(R_\bullet)\}$.
\end{proof}
Given the Witt Vector construction commutes with colimits, we can proceed with the theorem. 
\begin{theorem}
     $W_n(-)$ takes geometrically regular extensions of Noetherian rings to pure extensions of rings. 
\end{theorem}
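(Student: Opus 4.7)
The plan is to combine Popescu's (N\'eron--Popescu) desingularization theorem with the colimit property established in Lemma \ref{lem: WittVectorCommutesDirectLimit}, so as to reduce to the smooth case already treated in Proposition \ref{lem: WnSmoothToPure}. Given a geometrically regular extension $R \to S$ of Noetherian rings, the first step is to invoke Popescu \cite{NeronPopescu} to write $S = \dirlim S_i$ as a filtered colimit of smooth, finitely presented (hence Noetherian) $R$-algebras $S_i$. Applying $W_n(-)$ together with Lemma \ref{lem: WittVectorCommutesDirectLimit} then yields $W_n(S) \isom \dirlim W_n(S_i)$, and by Proposition \ref{lem: WnSmoothToPure} each structure map $W_n(R) \to W_n(S_i)$ is pure.

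The remaining substantive step is the general categorical observation that a filtered colimit of pure extensions with a common source is again pure. For any $W_n(R)$-module $L$, commutation of tensor product with directed colimits gives
$$L \tensor_{W_n(R)} W_n(S) \;\isom\; \dirlim \bigl(L \tensor_{W_n(R)} W_n(S_i)\bigr).$$
If some $\ell \in L$ mapped to zero in this colimit, then, by exactness of filtered colimits of modules, the element $\ell \tensor 1$ would already vanish in $L \tensor_{W_n(R)} W_n(S_j)$ at some sufficiently large stage $j$; purity of $W_n(R) \to W_n(S_j)$ would then force $\ell = 0$. Functoriality of $W_n(-)$ guarantees that the transition maps in the system are unital ring maps, so that the representatives $\ell \tensor 1$ pass coherently across indices.

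I do not expect a genuine obstacle: Popescu's theorem is doing all of the heavy lifting, and preservation of purity under filtered colimits is a formal consequence of exactness of filtered colimits of modules. The only point requiring mild care is to confirm that the filtered system supplied by Popescu consists of \emph{Noetherian} $R$-algebras so that Proposition \ref{lem: WnSmoothToPure} applies directly; this follows from the finite presentation clause in the standard formulation of Popescu's theorem.
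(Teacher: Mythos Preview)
Your proposal is correct and follows essentially the same route as the paper: invoke Popescu to write $S$ as a filtered colimit of smooth $R$-algebras, apply Proposition~\ref{lem: WnSmoothToPure} at each stage, use Lemma~\ref{lem: WittVectorCommutesDirectLimit} to identify $W_n(S)$ with the colimit, and conclude by preservation of purity under filtered colimits. You have in fact been slightly more careful than the paper in spelling out why filtered colimits preserve purity and in flagging the Noetherian hypothesis needed to apply Proposition~\ref{lem: WnSmoothToPure}.
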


\begin{proof}
    Suppose $R \to S$ is a geometrically regular extension as above. Popescu's theorem tells us that $S$ is a filtered colimit of smooth $R$-algebras $S = \dirlim S_i$. $W_n(R) \to W_n(S_i)$ for each $i$ is hence a pure morphism by proposition \ref{lem: WnSmoothToPure}, and as purity is preserved under colimits, it follows that $W_n(R) \to \dirlim W_n(S_i)$ is pure. via lemma \ref{lem: WittVectorCommutesDirectLimit} it follows that $ \dirlim W_n(S_i) =   W_n(\dirlim S_i) = W_n(S)$. Thus we conclude that $W_n(R) \to W_n(S)$ is pure. 
\end{proof}
\begin{corollary}
\label{cor: GRingImpliesPureWitt}
    If $\loc$ is a local G-ring, then $W_n(R) \to W_n(\wh{R}) = W_n(R)\complete$ is pure. 
\end{corollary}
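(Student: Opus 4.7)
The plan is short and essentially assembles two results already in place. Since $R$ is a local G-ring, applying the defining property at the maximal ideal $\fm$ gives that the completion map $R = R_\fm \to \wh{R_\fm} = \wh{R}$ is geometrically regular. This is the precise hypothesis required by the preceding theorem, which I would invoke directly to conclude that the induced morphism $W_n(R) \to W_n(\wh{R})$ is a pure map of $W_n(R)$-modules.

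To finish, I would identify the target: by Lemma \ref{lem: EquivCompletions}, $W_n(\wh{R}) \isom W_n(R)\complete$ canonically, with the isomorphism compatible with the natural map from $W_n(R)$. Substituting this identification into the purity statement yields that $W_n(R) \to W_n(R)\complete$ is pure, as claimed.

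I do not expect any obstacle here. The only point worth being careful about is to check that the identification of $W_n(\wh{R})$ with $W_n(R)\complete$ from Lemma \ref{lem: EquivCompletions} is compatible with the map coming from $W_n(R \to \wh{R})$ on one side and the $\fm_{W_n(R)}$-adic completion map $W_n(R) \to W_n(R)\complete$ on the other; this is immediate from the construction, since both maps are induced by functoriality of $W_n(-)$ applied to $R \to \wh{R}$, followed by the inverse limit presentation used in the proof of Lemma \ref{lem: EquivCompletions}.
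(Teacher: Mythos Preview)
Your proposal is correct and follows essentially the same approach as the paper: invoke the definition of a G-ring to get that $R \to \wh{R}$ is geometrically regular, apply the preceding theorem, and use Lemma~\ref{lem: EquivCompletions} for the identification $W_n(\wh{R}) \isom W_n(R)\complete$. The paper's proof is a one-liner to this effect; your added remark about compatibility of the two maps into $W_n(R)\complete$ is a reasonable bit of extra care but not something the paper dwells on.
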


\begin{proof}
    This immediately follows from the definition of a G-ring (see \cite[07GG]{stacks-project}). In particular, this result holds when $R$ is excellent. 
\end{proof}

The author is unaware whether or not $W_n(R) \to W_n(\wh{R}) = W_n(R)\complete$ is flat when $R$ is not $F$-finite. Regardless, following our definition of Quasi-$F$-purity we will use this corollary to conclude that the Quasi-$F$-pure height is preserved after completion.

\section{Quasi-$F$-Purity}
A ring $R$ of positive characteristic is \newword{$n$-Quasi-$F$-Split} \cite{Yobuko_QuasiFrobeniusSplittingandliftingofCYVarsInCharp} if there exists a $W_n(R)$-Module homomorphism $F_*W_n(R) \to R$ that makes the following diagram commute:
\begin{center}
\begin{tikzcd}
W_n(R) \arrow[d, "\fR^{n-1}"'] \arrow[r, "F"] & F_*W_n(R) \arrow[ld, dashed] \\
R                                             &                               
\end{tikzcd}
\end{center}
Letting $Q_{R,n}$ be the pushout of the diagram above, one can check that the existence of a quasi-$F$-splitting is equivalent to the existence of a splitting of the map $\Phi_{R,n}$ below:
\begin{center}
\begin{tikzcd}
W_n(R) \arrow[d, "\fR^{n-1}"'] \arrow[r, "F"] & F_*W_n(R) \arrow[d]                   \\
R \arrow[r, "{\Phi_{R,n}}"']                  & {Q_{R,n}} \arrow[l, dashed, bend right]
\end{tikzcd}
\end{center}
Following the pushout construction, we can identify $Q_{R,n} = F_* \ov{W_n}(R)$, and that $\Phi_{R,n}$ is defined by the assignment $r \mapsto F_*(\ov{[r^p]})$. 

\begin{remark}
    $Q_{R,n}$ is naturally an $R$-Module via the action $r \cdot F_*\ov{\alpha} = F_*\ov{\left([r^p] \cdot \alpha\right)}$. As completion commutes with both Frobenius pushforward and mod $p$ reduction, it is clear from lemma \ref{lem: EquivCompletions} that $Q_{R,n}\complete \isom Q_{\wh{R},n}$ as $\wh{R}$-Modules.  When $R$ is $F$-finite,  $Q_{R,n}$ is a finite $R$-Module, in which case $\wh{R} \tensor_R Q_{R,n} \isom Q_{\wh{R},n}$ also holds. This construction is functorial; i.e. for any map of rings $R \to S$ and any $n \in \NN$ there is a corresponding map of $R$-Modules $Q_{R,n} \to Q_{S,n}$. 
\end{remark}
\begin{lemma}
    $R$ is reduced if and only if $\Phi_{R,n}$ is an injective $R$-Module homomorphism for some (equivalently, all) $n \in \NN$. 
    \label{lem: reducediffinj}
\end{lemma}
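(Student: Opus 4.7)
My plan is to identify the kernel of $\Phi_{R,n}$ with the set $\{r \in R : r^p = 0\}$ and then observe that this set vanishes exactly when $R$ is reduced. Both implications are elementary once the kernel is computed, and the computation will simultaneously show that the kernel is \emph{independent} of $n$, which gives the ``some $n$ $\Leftrightarrow$ all $n$'' clause for free.

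For the kernel computation, I use that $Q_{R,n} = F_*\ov{W_n}(R)$ is the quotient of $W_n(R)$ by $pW_n(R)$ (after restriction of scalars along Frobenius), so $\Phi_{R,n}(r) = F_*\ov{[r^p]}$ vanishes if and only if $[r^p] \in pW_n(R)$. Using the identity $p = VF = FV$ in $W_n(R)$ recorded in Section~\ref{Witt Intro}, I compute
\[
p \cdot \alpha \;=\; V(F(\alpha)) \;=\; (0,\alpha_0^p,\alpha_1^p,\dots,\alpha_{n-2}^p)
\]
for every $\alpha = (\alpha_0,\dots,\alpha_{n-1}) \in W_n(R)$. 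In particular every element of $pW_n(R)$ has zeroth coordinate equal to $0$. Since $[r^p] = (r^p,0,\dots,0)$, comparing zeroth coordinates in any equation $[r^p] = p\cdot \alpha$ forces $r^p = 0$; conversely if $r^p = 0$ then $[r^p]$ is the zero Witt vector and trivially lies in $pW_n(R)$. Thus $\ker \Phi_{R,n} = \{r \in R : r^p = 0\}$, and this description is manifestly independent of $n$.

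To finish, if $R$ is reduced then no nonzero element can have $p$-th power zero, so $\Phi_{R,n}$ is injective. For the converse, suppose $\ker \Phi_{R,n} = 0$ and let $r \in R$ be nilpotent with $r^k = 0$ for some minimal $k \ge 1$. If $k \ge 2$, set $k' := \lceil k/p \rceil$; then $k' < k$ (as $p \ge 2$) and $(r^{k'})^p = r^{pk'} = 0$, so by hypothesis $r^{k'} = 0$, contradicting the minimality of $k$. Hence $k = 1$ and $r = 0$, so $R$ is reduced. The only step that needs care is the explicit identification of $pW_n(R)$ via $p = VF$; after that, everything reduces to a coordinate check and a brief exponent estimate, so I expect no real obstacle.
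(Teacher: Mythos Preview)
Your proof is correct and follows essentially the same approach as the paper: both compute $\ker\Phi_{R,n}=\{r\in R:r^p=0\}$ by observing that a Teichm\"uller lift $[s]$ lies in $pW_n(R)$ only when $s=0$ (you make this explicit via $p\cdot\alpha=VF(\alpha)=(0,\alpha_0^p,\dots,\alpha_{n-2}^p)$, the paper via the remark that $[t]\notin(p)$ for $t\neq 0$), and then identify this kernel with the nilradical. The only cosmetic difference is that the paper takes the equivalence ``$R$ reduced $\iff$ Frobenius injective'' as known, while you supply the short descent argument on the nilpotency index; both routes are equally valid.
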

\begin{proof}
    Fix $n \in \NN$. We will prove the equivalent statement that $F: R \to F_*R$ is not injective if and only if $\Phi_{R,n}$ is not injective. We are most of the way there fairly easily:
    $$0 \neq t \in \ker(F) \iff t^p = 0 \iff F_*[t^p] = 0 \Rightarrow F_*\ov{[t^p]} = 0 \iff 0 \neq t \in \ker(\Phi_{R,n})$$
    Recall that $\forall t \in R$ nonzero, $[t] = (t,0, \dots, 0) \not\in (p) = ((0,1,0,\dots,0))$. It follows that $F_*[t^p] = 0 \iff F_*\ov{[t^p]} = 0$ as desired. 
\end{proof}

Testing for Frobenius splitting is is not a useful method to detect singularities outside of the $F$-finite case. Indeed, there exists excellent local Henselian DVRs \cite{DattaMurayama_TateAlgebrasandFrobeniusNon-SplittingofExcellentRegularRings} over non-$F$-finite fields that are not $F$-split. This ring is $F$-pure however, and as faithfully flat maps are pure, it immediately follows from \cite{Kunz_FlatIffRegular} that regular rings are always $F$-pure. Thus, testing for purity is far more natural to do in the non-$F$-finite setting. This motivates the following definition: 

\begin{definition}
\label{def: QFPurity}
    Let $R$ be a Noetherian ring of characteristic $p$. $R$ is \newword{$n$-Quasi-$F$-pure} if $R \to Q_{R,n}$ is a pure $W_n(R)$-Module homomorphism. If such an $n$ exists, $R$ is \newword{Quasi-$F$-Pure}. Let $\height(R)$ denote the \newword{Quasi-$F$-pure height}, or the minimal $n$ such that $R$ is $n$-Quasi-$F$-pure, where $\height(R) = \infty$ if it is not $n$-Quasi-$F$-pure for any $n$. 
\end{definition}
It is easy to see that if $R$ is $n$-Quasi-$F$-pure, then it is also $n+1$-Quasi-$F$-pure, implying our definition of height is well founded. As $F_*\ov{W_1}(R) \isom F_*R$, $R$ is $F$-pure if and only if it is $1$-quasi-$F$-pure. Notably, as pure maps must be injective to begin with, lemma \ref{lem: reducediffinj} above has the following obvious (and useful) corollary:
\begin{corollary}
    If $R$ is Quasi-$F$-Pure then $R$ is reduced.
    \label{cor: QFPureImpliesReduced}
\end{corollary}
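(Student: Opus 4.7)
The plan is to chain together two simple facts: purity implies injectivity, and injectivity of $\Phi_{R,n}$ is already known to be equivalent to reducedness by the preceding Lemma \ref{lem: reducediffinj}.

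First I would unpack the hypothesis: if $R$ is Quasi-$F$-Pure, then by Definition \ref{def: QFPurity} there exists some $n \in \NN$ such that $\Phi_{R,n}: R \to Q_{R,n}$ is a pure map of $W_n(R)$-modules. Next, I would invoke the standard observation recorded on page \pageref{def: pure} that pure maps are in particular injective: applying the defining condition with the test module $L = W_n(R)$, the map $W_n(R) \tensor_{W_n(R)} R \to W_n(R) \tensor_{W_n(R)} Q_{R,n}$ must be injective, which after the canonical identifications just says $\Phi_{R,n}$ itself is injective.

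With injectivity of $\Phi_{R,n}$ in hand, the conclusion is immediate from Lemma \ref{lem: reducediffinj}, which asserts that injectivity of $\Phi_{R,n}$ for some (equivalently, all) $n \in \NN$ is equivalent to $R$ being reduced. There is no real obstacle here; the substantive work was already done in Lemma \ref{lem: reducediffinj}, and this corollary is essentially a one-line consequence of the fact that purity is a strengthening of injectivity. The only minor point worth mentioning in the write-up is to confirm that $\Phi_{R,n}$ being a pure map of $W_n(R)$-modules (rather than of $R$-modules) is enough to deduce its injectivity as an $R$-module (indeed, as a set map), which is clear since the underlying abelian group map is the same.
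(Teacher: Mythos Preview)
Your proposal is correct and matches the paper's own approach exactly: the paper simply notes that pure maps are injective and then invokes Lemma~\ref{lem: reducediffinj}. Your extra remarks spelling out why purity yields injectivity and why the $W_n(R)$-module versus $R$-module distinction is harmless are fine but unnecessary for the write-up.
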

The converse is not true; there are various examples in \cite{kawakami2022fedder} of integral $F$-finite schemes that are not Quasi-$F$-split, and hence not Quasi-$F$-pure. Similar to the classical notions of $F$-Splitting and $F$-Purity, these notions are equivalent when $R$ is $F$-finite or is a complete local ring.
\begin{theorem}
    Suppose $R$ is a local ring. If $R$ is either complete OR $F$-finite, then being $n$-Quasi-$F$-Split and $n$-Quasi-$F$-Pure are equivalent notions. In particular, the Quasi-$F$-pure height and the Quasi-$F$-split height of $R$ are the same. \label{Thm: QFSplitIffQFPure}
\end{theorem}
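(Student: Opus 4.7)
The forward direction---$n$-quasi-$F$-split implies $n$-quasi-$F$-pure---is immediate, since any split injection is pure. For the converse, the crucial reduction (valid for \emph{any} Noetherian local $R$) is that the $W_n(R)$-module structure on $Q_{R,n}=F_*\ov{W_n}(R)$ factors through the surjection $\fR^{n-1}:W_n(R)\onto R$. Indeed, every element $\beta\in\ker(\fR^{n-1})$ lies in the image of Verschiebung, and the identity $FV=p$ then forces $F(\beta)\in pW_n(R)$; hence
$$\beta\cdot F_*\ov{\mu}\;=\;F_*\ov{F(\beta)\,\mu}\;=\;0$$
for every $\mu\in W_n(R)$, since $p=0$ in $\ov{W_n}(R)$. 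Consequently $Q_{R,n}$ carries a natural $R$-module structure (agreeing with the one recorded in the preceding remark), any $R$-linear map out of $Q_{R,n}$ is automatically $W_n(R)$-linear, and for any $W_n(R)$-module $L$ one obtains a canonical identification $L\tensor_{W_n(R)}Q_{R,n}\isom (L\tensor_{W_n(R)}R)\tensor_R Q_{R,n}$. It follows that purity of $\Phi_{R,n}$ as a $W_n(R)$-module map is equivalent to purity as an $R$-module map, and so it suffices to exhibit an $R$-linear splitting of a pure $R$-module injection $\Phi_{R,n}:R\into Q_{R,n}$.

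Assume first that $R$ is $F$-finite. By Lemma \ref{lem: Witt Noeth iff FFinite} and \cite[Lemma 2.5]{kawakami2022fedder}, $W_n(R)$ is Noetherian and $F_*W_n(R)$ is finite over $W_n(R)$, so its quotient $Q_{R,n}$ is a finite $W_n(R)$-module, and by the reduction above also a finite---hence finitely presented---$R$-module. The cokernel $C$ of $\Phi_{R,n}$ is therefore finitely presented, and the standard characterization of purity (``$\Hom_R(M,Q_{R,n})\onto\Hom_R(M,C)$ for every finitely presented $M$'') applied with $M=C$ lets us lift $\Id_C$ to the desired splitting. Suppose instead that $R$ is complete local, and set $E=E_R(\fK)$. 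Purity of $\Phi_{R,n}$ gives an injection $E\isom E\tensor_R R\into E\tensor_R Q_{R,n}$; applying the exact functor $\Hom_R(-,E)$ and using Matlis duality $\Hom_R(E,E)\isom R$ yields a surjection
$$\Hom_R(Q_{R,n},R)\;\isom\;\Hom_R(E\tensor_R Q_{R,n},E)\;\onto\;\Hom_R(E,E)\;\isom\;R.$$
A direct adjunction computation identifies this map with evaluation at $\Phi_{R,n}(1)$, so any preimage of $1\in R$ is the required $R$-linear splitting.

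The main technical obstacle is that $W_n(R)$ need not be Noetherian outside the $F$-finite setting, so the classical duality and purity toolkit is simply unavailable over $W_n(R)$ itself. The proof circumvents this by the opening reduction $\ker(\fR^{n-1})\cdot Q_{R,n}=0$, which transplants the problem onto $R$-modules; the two hypotheses---$F$-finiteness and completeness---then contribute by different routes (finite presentation of the cokernel in the first case, Matlis duality in the second) exactly the input needed to promote purity to a splitting.
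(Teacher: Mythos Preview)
Your argument is correct. The opening reduction---that $\ker(\fR^{n-1})$ annihilates $Q_{R,n}$ via $FV=p$, so the $W_n(R)$-structure factors through $R$---is only implicit in the paper (it simply passes to $R$-modules without comment), and making it explicit is a genuine improvement in clarity. Your treatment of the complete local case is essentially the same as the paper's: you spell out the Matlis duality step that the paper obtains by citing \cite[Lemma 1.2]{FeddersCrit}.

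The real difference is in the $F$-finite case. The paper handles it by reducing to the complete case: it notes that splitting of $\Phi_{R,n}$ is equivalent to surjectivity of the evaluation $\Hom_R(Q_{R,n},R)\to R$, tensors with the faithfully flat extension $R\to\wh R$, uses finite generation of $Q_{R,n}$ to identify $\wh R\otimes_R\Hom_R(Q_{R,n},R)\cong\Hom_{\wh R}(Q_{\wh R,n},\wh R)$, and then invokes the already-proved complete case. Your route is more direct: once $Q_{R,n}$ is finite over $R$, the cokernel $C$ is finitely presented, and the standard characterization of purity (surjectivity of $\Hom_R(M,Q_{R,n})\to\Hom_R(M,C)$ for finitely presented $M$) with $M=C$ immediately lifts $\Id_C$. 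This avoids completion entirely and keeps the two cases logically independent, whereas the paper's argument makes the $F$-finite case depend on the complete one (and tacitly on the fact that $n$-quasi-$F$-purity of $R$ passes to $\wh R$, which is easy here since $Q_{R,n}$ is finite but is not stated). Your approach is cleaner; the paper's has the mild advantage of foreshadowing the later completion results.
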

\begin{proof}

    All maps that split are pure, so it is sufficient to check that, when $R$ is complete local or $F$-finite, that being $n$-Quasi-$F$-pure implies $R$ is $n$-Quasi-$F$-Split. \\

    Suppose the map $\Phi_{R,n}: R \to Q_{R,n}$ is a pure map of $R$-Modules. 
    \begin{itemize}
        \item If $R$ is a complete local ring, then a Matlis duality argument \cite[lemma 1.2]{FeddersCrit} implies that $\Phi_{R,n}$ splits.
        \item  Now assume $R$ is $F$-finite. To show $\Phi_{R,n}$ splits, it is equivalent to showing that the evaluation map $\Hom_R(Q_{R,n},R) \to R$ is surjective. $\wh{R}$ is a faithfully flat $R$-module, so one only needs to show that
        $$\wh{R} \tensor_R \Hom_R(Q_{R,n},R) \onto \wh{R}$$
        When $R$ is $F$-finite, $F_*\ov{W_n}(R) = Q_{R,n}$ is a finitely generated $R$-Module. Therefore, 
        $$\wh{R} \tensor_R \Hom_R(Q_{R,n},R) \isom \Hom_{\wh{R}}(Q_{R,n}\complete,\wh{R})\isom \Hom_{\wh{R}}(Q_{\wh{R},n},\wh{R})$$
        And we know $\Hom_{\wh{R}}(Q_{\wh{R},n},\wh{R}) \onto \wh{R}$ from the previous case.
        
    \end{itemize}

\end{proof}

\begin{lemma}
    Suppose $R \to S$ is a pure extension of rings. Then $\height(R) \leq \height(S)$. \label{lem: PureExtn}
\end{lemma}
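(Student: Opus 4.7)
My plan is to use the functoriality of the Quasi-$F$-purity construction (the Remark before Lemma \ref{lem: reducediffinj}) to transfer purity from $\Phi_{S,n}$ down to $\Phi_{R,n}$. We may assume $\height(S) = n < \infty$, since otherwise the inequality is vacuous, and it then suffices to show that $\Phi_{R,n}: R \to Q_{R,n}$ is pure as a map of $W_n(R)$-modules.

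First I would set up the commutative square
\[
\begin{tikzcd}
R \arrow[r, "\Phi_{R,n}"] \arrow[d, "\alpha"'] & Q_{R,n} \arrow[d, "\beta"] \\
S \arrow[r, "\Phi_{S,n}"'] & Q_{S,n}
\end{tikzcd}
\]
and argue that the composition $R \xrightarrow{\alpha} S \xrightarrow{\Phi_{S,n}} Q_{S,n}$ is pure as a $W_n(R)$-module map. Since the square commutes, this composition equals $\beta \circ \Phi_{R,n}$. The elementary fact that if a composition $f \circ g$ is pure then $g$ itself is pure (because $M \otimes (f \circ g) = (M \otimes f) \circ (M \otimes g)$ being injective forces $M \otimes g$ to be injective) then forces $\Phi_{R,n}$ to be pure.

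To show the composition is pure over $W_n(R)$, I would handle each factor separately. For $\alpha: R \to S$, the key observation is that both $R$ and $S$ are $W_n(R)$-modules through $\fR^{n-1}: W_n(R) \to R$ (with $S$ further a module via $R \to S$), so for any $W_n(R)$-module $L$, setting $M := L \otimes_{W_n(R)} R$ identifies the induced map as $M \to M \otimes_R S$, which is injective by the $R$-module purity of $\alpha$. For $\Phi_{S,n}: S \to Q_{S,n}$, which is pure over $W_n(S)$ by hypothesis, a standard base-change trick upgrades $W_n(S)$-purity to $W_n(R)$-purity: any $W_n(R)$-module $L$ yields the $W_n(S)$-module $L' := L \otimes_{W_n(R)} W_n(S)$, and tensor transitivity identifies $L \otimes_{W_n(R)}(-)$ with $L' \otimes_{W_n(S)}(-)$ on $W_n(S)$-modules.

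The main point requiring care is bookkeeping around the module structures, in particular verifying that the $W_n(R)$-action on $S$ (and on $Q_{S,n}$) is the same whether viewed through $W_n(R) \to R \to S$ or through $W_n(R) \to W_n(S) \to S$; this follows from the naturality of the restriction map $\fR^{n-1}$. Once these compatibilities are in hand, the argument becomes largely formal, and composing pure maps then closes the proof.
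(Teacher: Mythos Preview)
Your proposal is correct and follows essentially the same route as the paper: both set up the commutative square, argue that the composite $R \to S \to Q_{S,n}$ (equivalently $R \to Q_{R,n} \to Q_{S,n}$) is pure, and conclude that the first factor $\Phi_{R,n}$ is pure. The only difference is cosmetic: the paper works directly in the category of $R$-modules (which is legitimate since every $W_n(R)$-action in sight factors through $\fR^{n-1}: W_n(R) \to R$), whereas you track the $W_n(R)$- and $W_n(S)$-module structures explicitly and then reconcile them.
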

\begin{proof}
Take the following diagram of $R$-Modules:
    \begin{center}
\begin{tikzcd}
R \arrow[r] \arrow[d] &Q_{R,n} \arrow[d] \\
S \arrow[r]           & Q_{S,n}          
\end{tikzcd}
    \end{center}
    As $R \to S$ is pure, it follows that if $S \to  Q_{S,n}$ is pure, so is $R \to Q_{S,n}$. Thus the composition $R \to Q_{R,n} \to Q_{S,n}$ is pure, so $R \to Q_{R,n}$ is pure. 
\end{proof}


\subsection*{Stability under Completion}
As $R$ is Noetherian the map $R \to \wh{R}$ is faithfully flat, hence pure. Thus lemma \ref{lem: PureExtn} implies that $\height(R) \leq \height(\wh{R})$. We'd like to know whether $=$ holds; while unknown in general, equality holds given a seemingly minor purity condition. 
\begin{lemma}
    Let $\locr$ be a local ring. If $W_n(R) \to W_n(\wh{R})$ is a pure map for any $n$, then $\height(R) = \height(\wh{R})$. 
\end{lemma}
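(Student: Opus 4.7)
We already have $\height(R) \leq \height(\wh{R})$ from the discussion preceding the lemma, so only the reverse inequality requires proof. The plan is to base change the pure map $\Phi_{R,n}$ along the ring homomorphism $W_n(R) \to W_n(\wh{R})$ and identify the base-changed map with $\Phi_{\wh{R},n}$.

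Suppose $R$ is $n$-quasi-$F$-pure, so that $\Phi_{R,n}\colon R \to Q_{R,n}$ is a pure $W_n(R)$-module homomorphism. First I would invoke the general fact that purity is preserved under arbitrary base change: if $M \to N$ is a pure map of $A$-modules, then for any ring map $A \to B$, the base-changed map $M \otimes_A B \to N \otimes_A B$ is pure as $B$-modules. This follows from tensor-hom associativity: for any $B$-module $L$, one has $L \otimes_B (M \otimes_A B) \cong L \otimes_A M$, and likewise for $N$, so the induced map is injective by purity of $M \to N$. Applying this with $A = W_n(R)$ and $B = W_n(\wh{R})$, we obtain a pure map of $W_n(\wh{R})$-modules
\[
R \otimes_{W_n(R)} W_n(\wh{R}) \longrightarrow Q_{R,n} \otimes_{W_n(R)} W_n(\wh{R}).
\]

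Next I would identify this base change with $\Phi_{\wh{R},n}\colon \wh{R} \to Q_{\wh{R},n}$. For the codomain, the identification $Q_{R,n} \otimes_{W_n(R)} W_n(\wh{R}) \cong Q_{\wh{R},n}$ follows from the pushout construction of $Q$: since $Q_{R,n}$ is the pushout of $R \xleftarrow{\fR^{n-1}} W_n(R) \xto{F} F_* W_n(R)$ in $W_n(R)$-modules, and tensor product commutes with pushouts as well as with the Frobenius pushforward $F_*$, the base change produces precisely the analogous pushout over $W_n(\wh{R})$. For the domain, the identification $R \otimes_{W_n(R)} W_n(\wh{R}) \cong \wh{R}$ reduces to verifying that $\ker(\fR^{n-1}_R) \cdot W_n(\wh{R}) = \ker(\fR^{n-1}_{\wh{R}})$ inside $W_n(\wh{R})$; equivalently, that the Verschiebung-generated ideal $V W_n(R)$ extends to $V W_n(\wh{R})$. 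The purity hypothesis enters here to guarantee that the natural surjection $R \otimes_{W_n(R)} W_n(\wh{R}) \twoheadrightarrow \wh{R}$ has no extra kernel. With these identifications in place, the pure map above is $\Phi_{\wh{R},n}$, so $\wh{R}$ is $n$-quasi-$F$-pure and hence $\height(\wh{R}) \leq n = \height(R)$.

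The main obstacle is the domain identification. Since $W_n(\wh{R})$ is generally not flat over $W_n(R)$ in the non-$F$-finite setting (cf.\ Lemma~\ref{lem: Witt Noeth iff FFinite}), the natural surjection $R \otimes_{W_n(R)} W_n(\wh{R}) \twoheadrightarrow \wh{R}$ is not automatically an isomorphism, and controlling its kernel uses the purity of $W_n(R) \to W_n(\wh{R})$ in an essential way. Once this technical identification is handled, the base-change argument collapses cleanly into the desired conclusion.
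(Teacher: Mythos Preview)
Your base-change strategy has a genuine gap at exactly the point you flag as the ``main obstacle,'' and purity of $W_n(R)\to W_n(\wh R)$ does not fill it. Purity says that for every $W_n(R)$-module $M$ the map $M\to M\otimes_{W_n(R)}W_n(\wh R)$ is injective; it says nothing about whether the comparison map $R\otimes_{W_n(R)}W_n(\wh R)\to \wh R$ (or $F_*W_n(R)\otimes_{W_n(R)}W_n(\wh R)\to F_*W_n(\wh R)$) is an isomorphism. The latter isomorphisms are of the ``relative Frobenius'' type and in this paper are only established for \etale extensions (Lemma~\ref{lem: RelWittFrobeniusIsomorphism}); for $R\to\wh R$ with $R$ non-$F$-finite there is no flatness available and no reason these tensor products compute what you want. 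So after base changing you have a pure map between objects you cannot identify with $\wh R$ and $Q_{\wh R,n}$, and the argument stalls.

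The paper avoids this identification problem entirely by working with the injective hull rather than by base change. The key input is that purity of the ring map $W_n(R)\to W_n(\wh R)$ immediately gives purity of $Q_{R,n}\to Q_{\wh R,n}$ as $R$-modules (quotient by $p$, then restrict scalars). One then uses the local criterion for purity: over a Noetherian local ring, $\Phi$ is pure iff $E\otimes\Phi$ is injective, where $E=E_R(\fK)=E_{\wh R}(\fK)$ is simultaneously the injective hull for $R$ and for $\wh R$. Since $E\otimes_R Q_{R,n}\hookrightarrow E\otimes_R Q_{\wh R,n}$ is injective and $E\otimes_R Q_{\wh R,n}=E\otimes_{\wh R}Q_{\wh R,n}$ (as $E$ is already an $\wh R$-module), one reads off directly that $\Phi_{R,n}$ is pure iff $\Phi_{\wh R,n}$ is pure. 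No tensor-product identifications over $W_n(R)$ are needed.
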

\begin{proof}
  Given $Q_{R,n} = F_*\ov{W_n}(R)$, one easily sees that purity of $W_n(R) \to W_n(\wh{R})$ implies the purity of $Q_{R,n} \to Q_{\wh{R},n}$. Recall that $E = E_R(\fK) = E_{\wh{R}}(\fK)$. $ Q_{R,n} \to Q_{\wh{R},n}$ is pure, so $E \tensor_R Q_{R,n} \to E \tensor_R Q_{\wh{R},n}$ is injective. From this criterion we have the following chain of equivalences:
    \begin{flalign*}
        R \text{ is $n$-Quasi-$F$-pure} &\iff \Phi_{R,n} \text{ is pure} \iff \ker(E \to E \tensor_R Q_{R,n}) = 0 \\
        &\iff \ker(E \to E \tensor_R Q_{R,n} \into E \tensor_R Q_{\wh{R},n}) = 0 \\
        &\iff \ker(E = E \tensor_{\wh{R}} \wh{R} \to E \tensor_{\wh{R}} Q_{\wh{R},n}) = 0 \\
         &\iff \Phi_{\wh{R},n} \text{ is pure} \iff \wh{R} \text{ is $n$-Quasi-$F$-pure}
    \end{flalign*}
\end{proof}
This purity condition is known to hold for $F$-finite rings, as $W_n(R)$ is Noetherian, and thus has faithfully flat (hence pure) completion. When $R$ is not $F$-finite, $W_n(R)$ is necessarily non-Noetherian, but corollary \ref{cor: GRingImpliesPureWitt} implies that this purity condition holds for excellent rings (or more generally, G-rings). This discussion yields the following corollary:
\begin{corollary}
\label{cor: HeightCompletes}
    If $R$ is a G-Ring (e.g. if $R$ is excellent) then $\height(R) = \height(\wh{R})$. 
\end{corollary}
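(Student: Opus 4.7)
The proof of this corollary is essentially a direct combination of the two results established immediately before it, so my plan is short. The immediately preceding lemma tells us that for a local ring $\locr$, the equality $\height(R) = \height(\wh{R})$ follows whenever the Witt-vector completion map $W_n(R) \to W_n(\wh{R})$ is pure (as a map of $W_n(R)$-modules). Meanwhile, Corollary \ref{cor: GRingImpliesPureWitt} has already established exactly this purity hypothesis under the G-ring assumption, since by definition of a G-ring the map $R \to \wh{R}$ is geometrically regular, and the theorem just above shows that $W_n(-)$ sends geometrically regular extensions to pure extensions.

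Thus the plan is: first invoke the G-ring hypothesis to apply Corollary \ref{cor: GRingImpliesPureWitt}, producing purity of $W_n(R) \to W_n(\wh{R})$ for every $n \in \NN$; then feed this into the previous lemma to obtain $\height(R) = \height(\wh{R})$. Finally, one should remark that excellent rings are in particular G-rings (this is part of the standard definition of excellence, e.g.\ \cite[07QS]{stacks-project}), which justifies the parenthetical ``e.g.\ if $R$ is excellent.''

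There is really no obstacle here — the corollary is stated precisely so as to package these two inputs. The only minor point to verify is that one wants the equality of heights for \emph{every} value of $n$ at once, but since Corollary \ref{cor: GRingImpliesPureWitt} gives purity of $W_n(R) \to W_n(\wh{R})$ for each individual $n$, and the preceding lemma gives the biconditional ``$R$ is $n$-quasi-$F$-pure iff $\wh{R}$ is $n$-quasi-$F$-pure'' at the same value of $n$, taking the infimum over $n$ on both sides yields $\height(R) = \height(\wh{R})$ immediately.
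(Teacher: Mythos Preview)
Your proposal is correct and follows essentially the same route as the paper: invoke Corollary~\ref{cor: GRingImpliesPureWitt} to obtain purity of $W_n(R) \to W_n(\wh{R})$ from the G-ring hypothesis, then apply the preceding lemma to conclude $\height(R) = \height(\wh{R})$. The paper presents this as a one-line discussion rather than a formal proof, and your added remark about matching the $n$'s on both sides is a harmless clarification.
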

While being a G-ring is sufficient, it is not known to the author what hypothesis are necessary on $R$ for this equality on height to hold.

\subsection*{Stability under Direct Limit}
It is well known that within the category of $R$-Modules, purity is preserved under direct limit. We prove a slightly more general claim that purity is preserved even when the base ring changes under the limit. This will be necessary for confirming that quasi-$F$-purity is preserved under direct limit.
\begin{lemma}
    Let Let $M = \dirlim M_i$ and $N = \dirlim N_i$ be two directed systems of $\dirlim R_i = R$-Modules where $M_i \to N_i$ is pure as a morphism of $R_i$-Modules for each $i$. Then $M \to N$ is a pure map of $R$-Modules, i.e. purity is preserved under direct limit.
\end{lemma}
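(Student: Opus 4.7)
The plan is to reduce to a direct-limit computation in the category of abelian groups, by rewriting both $L \otimes_R M$ and $L \otimes_R N$ as filtered colimits indexed by the same system $\{i\}$ whose $i$-th term is expressed entirely over $R_i$, at which point purity at each level can be transported to the limit.

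First, fix an $R$-module $L$ and view it as an $R_i$-module via the structure map $R_i \to R$ for each $i$. Purity of $M_i \to N_i$ as $R_i$-module maps then yields an injection
\[
L \tensor_{R_i} M_i \to L \tensor_{R_i} N_i
\]
for every $i$, and these injections are compatible with the transition maps coming from $R_i \to R_j$, $M_i \to M_j$, $N_i \to N_j$ for $i \leq j$.

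Second, I would establish the natural identification
\[
L \tensor_R M \;\isom\; \dirlim_i \bigl(L \tensor_{R_i} M_i\bigr),
\]
and the analogous one for $N$. This proceeds in two moves. The first move is to observe that $M \isom \dirlim_i\bigl(R \tensor_{R_i} M_i\bigr)$ as $R$-modules: the canonical maps $R \tensor_{R_i} M_i \to M$ induced by $M_i \to M$ are compatible with transitions, and one can check they induce an isomorphism by constructing the inverse from the cone $M_i \to R\tensor_{R_i} M_i \to \dirlim$. The second move is the base-change identity $L \tensor_R (R \tensor_{R_i} M_i) \isom L \tensor_{R_i} M_i$, which is just associativity of tensor product together with the fact that $L$ is an $R$-module. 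Combining these, and using that $L \tensor_R -$ commutes with direct limits of $R$-modules, produces the desired identification. The same argument applies to $N$.

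Finally, direct limits are exact in the category of abelian groups (equivalently, of $R$-modules), so the levelwise injections from the first step assemble into an injection
\[
\dirlim_i (L \tensor_{R_i} M_i) \to \dirlim_i (L \tensor_{R_i} N_i),
\]
which, under the identifications from the second step, is exactly the map $L \tensor_R M \to L \tensor_R N$. Since $L$ was arbitrary, $M \to N$ is pure over $R$. The main point requiring care is the second step: one has to track that the $R$-module structure on $M$ really is the one coming from the colimit of the base changes $R \tensor_{R_i} M_i$, and that the base change isomorphism is compatible with the transition maps of the system so that the colimit identification is genuinely natural. Everything else is routine.
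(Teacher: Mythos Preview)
Your proof is correct and follows essentially the same approach as the paper: fix $L$, use purity at each level to get injections $L\tensor_{R_i}M_i \to L\tensor_{R_i}N_i$, pass to the direct limit using exactness of filtered colimits, and identify the resulting colimits with $L\tensor_R M$ and $L\tensor_R N$. Your justification of the key identification $L\tensor_R M \isom \dirlim_i(L\tensor_{R_i}M_i)$ via the intermediate step $M \isom \dirlim_i(R\tensor_{R_i}M_i)$ and base change is slightly more explicit than the paper's direct appeal to the universal property, but the argument is the same in substance.
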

\begin{proof}
    Choose any $R$-Module $L$. Letting $R_i \to R \to L$ define an $R_i$-Module structure on $L$, we immediately see that $L \tensor_{R_i} M_i \to L \tensor_{R_i} N_i$ is injective for all $i$ by purity. Further, $\dirlim (L \tensor_{R_i} M_i) \to \dirlim (L \tensor_{R_i} N_i)$ is injective, as direct limits are exact in the category of abelian groups and thus preserve injectivity. Thus this map is an injective morphism of $R$-Modules. Via tensoring the universal property diagram of $\dirlim M_i$ with $L$ (as an $R_i$-Module for each $M_i$ and as an $R$-Module for $M$), we see that $L \tensor_R \dirlim M_i$ satisfies the universal property of direct limits with respect to the directed system $\{L \tensor_{R_i} M_i\}_{i \in \NN}$. It follows that
    $$\dirlim (L \tensor_{R_i} M_i) \isom L \tensor_R \dirlim M_i$$
    As $R$-Modules, and similarly for the $N_i$. Thus
    $$L \tensor_R M \isom L \tensor_R \dirlim M_i \to L \tensor_R \dirlim N_i \isom L \tensor_R N$$
    is an injective map of $R$-Modules, and hence, $M \to N$ is pure. 
\end{proof}
\begin{theorem}
\label{Thm: DirectLimit}
   Let $\{S_i\}$ be a directed system of rings where $\dirlim S_i = S$. If $\exists M \in \NN$ such that $\forall m > M$ $\height(S_m) \leq n$, then $\height(S) \leq n$. 
\end{theorem}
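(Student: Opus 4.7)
The plan is to realize $\Phi_{S,n} : S \to Q_{S,n}$ as a direct limit of the maps $\Phi_{S_i,n} : S_i \to Q_{S_i,n}$ and then invoke the preceding lemma on preservation of purity under direct limits (with varying base ring).

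First I would replace $\{S_i\}$ by its cofinal subsystem indexed by $i > M$; this does not change the direct limit $S$, and now every $\Phi_{S_i,n}$ is pure over $W_n(S_i)$ by hypothesis. Functoriality of the construction $R \mapsto Q_{R,n}$ (noted in the remark following the definition of $Q_{R,n}$) yields a directed system of commutative squares
\begin{center}
\begin{tikzcd}
S_i \arrow[r, "\Phi_{S_i,n}"] \arrow[d] & Q_{S_i,n} \arrow[d] \\
S_j \arrow[r, "\Phi_{S_j,n}"]           & Q_{S_j,n}
\end{tikzcd}
\end{center}
for $i \leq j$, all compatible over the directed system $\{W_n(S_i)\}$ of Witt rings.

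Next I would check that taking direct limits across this system recovers the map $\Phi_{S,n}$. By Lemma \ref{lem: WittVectorCommutesDirectLimit} we have $\dirlim W_n(S_i) \cong W_n(S)$. Since restriction of scalars along Frobenius and mod-$p$ reduction both commute with direct limits, we likewise obtain $\dirlim Q_{S_i,n} = \dirlim F_* \ov{W_n}(S_i) \cong F_* \ov{W_n}(S) = Q_{S,n}$ in a way compatible with the Teichmüller description $r \mapsto F_*\ov{[r^p]}$ of $\Phi$. Hence the colimit of the directed system of squares above is exactly $\Phi_{S,n} : S \to Q_{S,n}$, viewed over $W_n(S)$.

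Finally, applying the preceding lemma on preservation of purity under colimits with varying base ring to the directed system $\Phi_{S_i,n}$ over $W_n(S_i)$, I conclude that $\Phi_{S,n}$ is pure as a map of $W_n(S)$-modules, so $S$ is $n$-Quasi-$F$-pure and $\height(S) \leq n$. The only nontrivial step is the verification that the colimit of the $\Phi_{S_i,n}$'s is literally $\Phi_{S,n}$; this is essentially bookkeeping, relying on the explicit description $\Phi_{R,n}(r) = F_*\ov{[r^p]}$ together with the fact that Teichmüller lifts, Frobenius pushforward, and mod-$p$ reduction all behave functorially under the ring maps $S_i \to S_j \to S$.
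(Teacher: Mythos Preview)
Your proposal is correct and follows essentially the same approach as the paper: restrict to a cofinal subsystem where each $\Phi_{S_i,n}$ is pure, identify $\dirlim Q_{S_i,n}$ with $Q_{S,n}$, and invoke the preceding lemma on purity under direct limits with varying base ring. The only cosmetic difference is in the identification $\dirlim Q_{S_i,n} \cong Q_{S,n}$: the paper argues that pushouts, being colimits, commute with filtered colimits, whereas you unwind $Q_{R,n} = F_*\ov{W_n}(R)$ directly and observe that Frobenius pushforward and mod-$p$ reduction each commute with direct limits; both arguments are valid and yield the same conclusion.
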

\begin{proof}
    Fix $M$ as above. If $\height(S_m) \leq n$, then $S_m \to Q_{S_m,n}$ is a pure map of $S_m$-Modules for all $m > M$. Via the prior lemma, it follows that $S = \dirlim S_i \to \dirlim Q_{S_i,n}$ is a pure map of $S$-Modules. \\ 
    
    Further, recall that direct limits commute with the Witt Vector construction (lemma \ref{lem: WittVectorCommutesDirectLimit}). This, along with the fact that pushouts are themselves colimits and thus commute with direct limits, implies that $\dirlim Q_{S_i,n} = Q_{S,n}$. It follows that $S \to Q_{S,n}$ is pure. Thus, $\height(S) \leq n$. 
\end{proof}
\begin{remark}
\label{rem: EqualityifPureLimit}
    if $\height(S_i) = n$ and this is a pure direct limit (i.e. $S_i \to S$ is a pure $S_i$-module homomorphism for all $i$ sufficiently large), then via lemma \ref{lem: PureExtn} we get equality, i.e. $\height(S) = n$ for $n$ as above. 
\end{remark}

\subsection*{Stability under \'Etale Extension}

For $S$ an $R$-algebra, it is well known that the relative Frobenius morphism $F_{S/R}: F_*R \tensor_R S \to F_*S$ is an isomorphism when $R \to S$ is \etale, see \cite[0EBS]{stacks-project}. A similar statement holds for the similarly defined relative Witt Frobenius.
\begin{lemma}
\label{lem: RelWittFrobeniusIsomorphism}
    Let $R \to S$ be an \etale extension. Then the relative Witt Frobenius 
    $$W_n(F)_{S/R}: F_*W_n(R) \tensor_{W_n(R)}W_n(S) \to F_*W_n(S)$$
    is an isomorphism. 
\end{lemma}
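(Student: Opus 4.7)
The plan is to show that both the source and target of $W_n(F)_{S/R}$ are étale $F_*W_n(R)$-algebras, that the map descends modulo a nilpotent ideal of $W_n(R)$ to the classical relative Frobenius isomorphism, and to conclude by rigidity of étale extensions.

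First, I would verify both sides are étale over $F_*W_n(R)$. The source $F_*W_n(R) \tensor_{W_n(R)} W_n(S)$ is the base change of the étale map $W_n(R) \to W_n(S)$ (étale since $W_n$ preserves étale morphisms) along the Witt Frobenius $F: W_n(R) \to F_*W_n(R)$, hence étale. The target $F_*W_n(S)$ is étale over $F_*W_n(R)$ because its underlying ring map coincides with $W_n(R) \to W_n(S)$.

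Next I would identify a nilpotent ideal of $W_n(R)$ for reduction. Let $I := VW_n(R) \subseteq W_n(R)$. Using the standard Witt identities $V(\alpha)\cdot y = V(\alpha F(y))$ and $FV = p$, one obtains $V(\alpha)V(\gamma) = V^2(F(\alpha\gamma))$, and iterating gives $(VW_n(R))^k \subseteq V^kW_n(R)$; since $V^n = 0$ on $W_n(R)$, it follows that $I^n = 0$. The iterated restriction $\fR^{n-1}: W_n(R) \to R$ has kernel exactly $I$, and the Witt Frobenius descends under this quotient to the ordinary Frobenius on $R$. The crucial intermediate claim is that $W_n(S) \tensor_{W_n(R)} R \isom S$: the left side is étale over $R$ by flat base change, the canonical map to $S$ induced by $\fR^{n-1}_S$ is surjective, and its kernel is nilpotent as a quotient of the nilpotent ideal $VW_n(S)$. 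Any surjection of étale $R$-algebras with nilpotent kernel is an isomorphism, since the kernel of an étale surjection is generated by an idempotent, and a nilpotent idempotent must vanish.

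With these pieces in place, reducing $W_n(F)_{S/R}$ modulo $I$ yields precisely the classical relative Frobenius $F_*R \tensor_R S \to F_*S$, which is an isomorphism for étale $R \to S$ by \cite[0EBS]{stacks-project}. The argument concludes via rigidity: any morphism between two étale $F_*W_n(R)$-algebras that becomes an isomorphism after quotienting by a nilpotent ideal is itself an isomorphism, since the inverse on the quotient lifts uniquely by étale lifting, and uniqueness forces the compositions to be the identities. The main obstacle I anticipate is verifying the identification $W_n(S) \tensor_{W_n(R)} R \isom S$, since a direct coordinate-wise comparison of the ideals $VW_n(R)\cdot W_n(S)$ and $VW_n(S)$ in $W_n(S)$ does not give equality outside the perfect case -- the étale-algebra rigidity argument is doing the real work.
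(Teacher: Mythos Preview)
Your argument is correct, and it proceeds along a genuinely different route from the paper. The paper simply invokes \cite[Proposition A.12]{LangerZink}, a general base-change statement to the effect that $W_n(R') \tensor_{W_n(R)} W_n(S) \isom W_n(R' \tensor_R S)$ for $R \to S$ \etale and $R \to R'$ arbitrary; specializing to $R' = F_*R$ and combining with the classical isomorphism $F_*R \tensor_R S \isom F_*S$ gives the lemma in one line. Your proof is instead self-contained: you identify the nilpotent ideal $VW_n(R)$, reduce both sides modulo it to recover the classical relative Frobenius, and then invoke the topological invariance of the \etale site (equivalently, formal \etaleness) to conclude. In effect you are reproving, in the special case needed, exactly the Langer--Zink proposition by the same mechanism that underlies its proof. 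What you gain is independence from an external reference and a transparent explanation of \emph{why} the result holds; what the paper gains is brevity. One small remark: your phrase ``the inverse on the quotient lifts uniquely by \etale lifting'' is slightly informal---the clean statement is that reduction modulo a nilpotent ideal induces an equivalence between \etale $F_*W_n(R)$-algebras and \etale $F_*R$-algebras, so a morphism that becomes an isomorphism after reduction was already one. Your unpacking of this via lifting $\bar\psi$ using formal \etaleness of $C$ over $A$, and then checking $\psi\phi$ and $\phi\psi$ are identities by uniqueness of lifts, is correct once stated carefully.
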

\begin{proof}
    We reference \cite[Proposition A.12]{LangerZink}. Using their notation, set $R' = F_*R$. Then $S' := F_*R \tensor_R S \isom F_*S$, where the isomorphism follows from the fact that $R \to S$ is \etale. As the pullback along Frobenius commutes with the Witt functor, i.e. $W_n(F_*R) \isom F_*W_n(R)$ and similarly for $S$, we obtain our desired result.
\end{proof}
From this, we can deduce that, for any local \etale extension $R \to S$ of a G-ring $R$, $\height(R) = \height(S)$. 
\begin{theorem}
\label{Thm: LocalEtale}
    Let $\loc$ be a G-ring and $\loc \to (S,\fn)$ be a local \etale extension of local rings. Then $S$ is a G-ring and $\height(R) = \height(S)$.
\end{theorem}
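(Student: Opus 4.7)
The plan is to split the statement into three pieces: the fact that $S$ is again a G-ring, the inequality $\height(R) \leq \height(S)$, and the reverse inequality $\height(S) \leq \height(R)$. The G-ring assertion is a standard consequence of the fact that étale (in particular smooth) extensions preserve the G-ring property, so I would just cite the relevant Stacks tag. For the first inequality, note that a local étale extension of local rings is necessarily faithfully flat and hence pure; Lemma \ref{lem: PureExtn} then gives $\height(R) \leq \height(S)$ with no extra work.

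The substance of the proof is the inequality $\height(S) \leq \height(R)$. The approach is a base-change argument along the ring map $W_n(R) \to W_n(S)$, which is étale (and, since it is local between local rings, faithfully flat) by the étale-stability of $W_n(-)$ recalled at the start of the section. Assuming $R$ is $n$-quasi-$F$-pure, i.e.\ $\Phi_{R,n}\colon R \to Q_{R,n}$ is a pure $W_n(R)$-module map, flat base change preserves purity, so the induced map
$$R \tensor_{W_n(R)} W_n(S) \longrightarrow Q_{R,n} \tensor_{W_n(R)} W_n(S)$$
is pure as a map of $W_n(S)$-modules. The goal is then to identify this tensored map with $\Phi_{S,n}\colon S \to Q_{S,n}$.

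The identification on the right is quick: Lemma \ref{lem: RelWittFrobeniusIsomorphism} gives $F_*W_n(R) \tensor_{W_n(R)} W_n(S) \isom F_*W_n(S)$, and reducing modulo $p$ on both sides (using right-exactness of tensor) yields $Q_{R,n} \tensor_{W_n(R)} W_n(S) \isom Q_{S,n}$. The identification on the left, $R \tensor_{W_n(R)} W_n(S) \isom S$, is where the real content sits. Using the short exact sequence $0 \to V^{n-1}W_1(R) \to W_n(R) \to R \to 0$ and flatness of $W_n(R) \to W_n(S)$, this reduces to showing that the extended ideal $(V^{n-1}W_1(R))\cdot W_n(S)$ coincides with $V^{n-1}W_1(S) = \ker(\fR^{n-1}_S)$. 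The Verschiebung/Frobenius reciprocity identity iterated gives $V^{n-1}([r])\cdot \alpha = V^{n-1}([r\,\alpha_0^{p^{n-1}}])$, so the last-coordinate values realized in $(V^{n-1}W_1(R))\cdot W_n(S)$ form the $R$-submodule $R\cdot S^{p^{n-1}} \subseteq S$. Since $R\to S$ is étale, the relative Frobenius $F_*R \tensor_R S \to F_*S$ is an isomorphism; iterating yields $S = R\cdot S^{p^{n-1}}$, giving the desired equality of ideals.

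The main obstacle is exactly this last computation — verifying $(V^{n-1}W_1(R))\cdot W_n(S) = V^{n-1}W_1(S)$ by combining the Witt arithmetic identity with the surjectivity statement $S = R\cdot S^{p^{n-1}}$ coming from étaleness. Once this identification is in place, the tensored map is readily checked to be $\Phi_{S,n}$ (via the assignment $s \mapsto F_*\overline{[s^p]}$), and purity descends through flat base change to conclude that $S$ is $n$-quasi-$F$-pure. Taken together with the other direction, this yields the equality $\height(R) = \height(S)$.
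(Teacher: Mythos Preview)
Your overall strategy---base-change $\Phi_{R,n}$ along the flat map $W_n(R)\to W_n(S)$ and identify the result with $\Phi_{S,n}$---is sound and is genuinely different from the paper's route. The paper first completes (using the G-ring hypothesis via Corollary~\ref{cor: HeightCompletes}) so that quasi-$F$-pure and quasi-$F$-split coincide, and then transports a \emph{splitting} along $-\tensor_{W_n(R)}W_n(S)$; because a splitting only needs to land in $S$, the paper never has to identify $R\tensor_{W_n(R)}W_n(S)$ with $S$, it just postcomposes with $\Id\tensor\fR^{n-1}$. Your purity argument, by contrast, requires the isomorphism $R\tensor_{W_n(R)}W_n(S)\isom S$ on the nose, but in exchange it avoids the reduction to completions entirely---so once fixed it actually establishes $\height(R)=\height(S)$ without invoking the G-ring hypothesis.

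The gap is in your proof of that isomorphism. The short exact sequence you write,
\[
0 \to V^{n-1}W_1(R) \to W_n(R) \xto{\fR^{n-1}} R \to 0,
\]
is incorrect for $n>2$: the kernel of $\fR^{n-1}$ is $VW_{n-1}(R)=\{(0,a_1,\dots,a_{n-1})\}$, not $V^{n-1}W_1(R)=\{(0,\dots,0,a_{n-1})\}$. Consequently your computation of the extended ideal $(V^{n-1}W_1(R))\cdot W_n(S)$, while internally correct, addresses the wrong ideal and does not establish $R\tensor_{W_n(R)}W_n(S)\isom S$. The fix is straightforward: using the correct sequence, one must show $VW_{n-1}(R)\cdot W_n(S)=VW_{n-1}(S)$. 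The identity $V(x)\cdot y=V(x\cdot F(y))$ together with Lemma~\ref{lem: RelWittFrobeniusIsomorphism} applied at level $n-1$ (surjectivity says every $\gamma\in W_{n-1}(S)$ is a sum $\sum\beta_iF(\alpha_i)$ with $\beta_i\in W_{n-1}(R)$) gives this immediately. Alternatively, observe that $R\tensor_{W_n(R)}W_n(S)$ is an \'etale $R$-algebra (base change of \'etale), is a quotient of the local ring $W_n(S)$ hence local, and surjects onto $S$ as an \'etale $R$-algebra; a surjection of local \'etale $R$-algebras is an isomorphism. Either repair completes your argument.
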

\begin{proof}
    First note that $S$ is essentially of finite type over $R$; it follows from \cite[07PV]{stacks-project} that $S$ is also a G-ring. In this setting via corollary \ref{cor: HeightCompletes}, we know that $\height(R) = \height(\wh{R})$,  $\height(S) = \height(\wh{S})$, and via \cite[039M]{stacks-project} that $\wh{R}_\fm \to \wh{S}_\fn$ remains \etale. Thus, we may reduce to the case where $R$ and $S$ are both complete local, where quasi-$F$-split and quasi-$F$-pure height coincide (Theorem \ref{Thm: QFSplitIffQFPure}). \\

    We first note that \etale maps are (faithfully) flat, hence pure. Thus $\height(R) \leq \height(S)$. As quasi-$F$-purity and quasi-$F$-splitting are equivalent notions over complete local rings, it is sufficient to check that if $R$ is $n$-quasi-$F$-split, then so is $S$. If $R$ is $n$-quasi-$F$-split there exists the following dashed morphism of $W_n(R)$-modules:
  \begin{center}
\begin{tikzcd}
W_n(R) \arrow[r] \arrow[d] & F_*W_n(R) \arrow[ld, dashed] \\
R                          &                             
\end{tikzcd}
    \end{center} 
    Applying the functor $-\tensor_{W_n(R)}W_n(S)$ yields the diagram of $W_n(S)$-Modules

    \begin{center}
\begin{tikzcd}
W_n(S) \arrow[r] \arrow[d] & F_*W_n(R) \tensor_{W_n(R)}W_n(S) \arrow[ld, dashed] \\
R \tensor_{W_n(R)} W_n(S)  &                                                    
\end{tikzcd}
    \end{center}
   Via lemma \ref{lem: RelWittFrobeniusIsomorphism}, we can see that $F_*W_n(R) \tensor_{W_n(R)} W_n(S) \isom F_*W_n(S)$. Further, we have a restriction morphism 
    $$\Id \tensor_{W_n(R)}\fR^{n-1}: R \tensor_{W_n(R)}W_n(S) \to R \tensor_{W_n(R)}S$$
    As $R \to S$ is \'etale, this map is an isomorphism \cite[Proposition A.8]{LangerZink}. Utilizing this alongside the $W_n(R)$-module structure of $R$ and $S$ yields the following isomorphism of $W_n(S)$-modules.
    $$ R \tensor_{W_n(R)}W_n(S) \isom  R \tensor_{W_n(R)}S \isom R \tensor_R S \isom S$$ 
   Plugging both of these isomorphisms into the above diagram yields
    \begin{center}
\begin{tikzcd}
W_n(S) \arrow[r] \arrow[d] &F_*W_n(S) \arrow[ld, dashed] \\
S &                                                    
\end{tikzcd}
    \end{center}
Where the dashed morphism denotes the desired $n$-Quasi-$F$-splitting.
\end{proof}

\begin{corollary}
    Suppose $\loc$ is a local G-ring and $(S,\fn)$ is a direct limit of local \etale $R$-algebras. Then $\height(R) = \height(S)$.
\end{corollary}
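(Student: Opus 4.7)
The plan is to reduce the statement to the combination of Theorem \ref{Thm: LocalEtale}, Theorem \ref{Thm: DirectLimit}, and Lemma \ref{lem: PureExtn} already established. Write $S = \dirlim S_i$ with each structure map $R \to S_i$ a local \etale extension of local rings.

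First, I would apply Theorem \ref{Thm: LocalEtale} termwise: since $R$ is a local G-ring and $R \to S_i$ is local \etale, every $S_i$ is itself a local G-ring and satisfies $\height(S_i) = \height(R)$. Setting $n := \height(R)$, we thus have $\height(S_i) = n$ for every $i$.

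Next, Theorem \ref{Thm: DirectLimit} applied to the directed system $\{S_i\}$ immediately gives $\height(S) \leq n = \height(R)$. For the reverse inequality I would show that $R \to S$ is a pure extension and then invoke Lemma \ref{lem: PureExtn}. Each $R \to S_i$ is local \etale, hence faithfully flat, hence pure as a morphism of $R$-modules. Applying the purity-under-direct-limit lemma (stated just before Theorem \ref{Thm: DirectLimit}) with the constant directed systems $R_i = R$, $M_i = R$, and $N_i = S_i$, the colimit map $R = \dirlim R \to \dirlim S_i = S$ is pure; Lemma \ref{lem: PureExtn} then yields $\height(R) \leq \height(S)$, completing the proof.

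Since each intermediate step is an immediate application of a previously established theorem, I do not expect any significant obstacle. The only point requiring minor care is verifying the hypotheses of the purity-under-direct-limit lemma in the case of a constant directed system, but this is formal. Alternatively, one can obtain equality in a single step via Remark \ref{rem: EqualityifPureLimit}, after observing that each transition map $S_i \to S_j$ (for $j \geq i$) is itself \etale by cancellation of \etale morphisms and local by construction, hence faithfully flat and pure, so that $S_i \to S$ is pure for every $i$.
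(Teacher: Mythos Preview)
Your proof is correct and follows essentially the same approach as the paper: use Theorem \ref{Thm: LocalEtale} to obtain $\height(S_i) = \height(R)$ for each $i$, invoke Theorem \ref{Thm: DirectLimit} for the inequality $\height(S) \leq \height(R)$, and use purity of $R \to S$ together with Lemma \ref{lem: PureExtn} for the reverse inequality. The paper's version is terser but the logic is identical.
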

\begin{proof}
    $R \to S$ is a direct limit of pure extensions, and hence pure. Thus by lemma \ref{lem: PureExtn} $\height(R) \leq \height(S)$. Checking $\height(S) \leq \height(R)$ is a combination of theorems \ref{Thm: DirectLimit} and \ref{Thm: LocalEtale}.
\end{proof}

\begin{corollary}
\label{cor: sepExtension}
    Let $\loc$ be a local G-ring and $k$-algebra, for $k$ any field of characteristic $p > 0$. If $L/k$ is an algebraic separable extension, then $\height(R) = \height(L \tensor_{k} R)$. 
\end{corollary}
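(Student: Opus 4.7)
The plan is to realise $L \tensor_{\fK} R$ as a filtered colimit of local \etale $R$-algebras, and then invoke the preceding corollary on direct limits of local \etale extensions.

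First, since $L/\fK$ is separable algebraic, we may write $L = \dirlim_i L_i$ where the $L_i$ range over the finite separable subextensions of $L/\fK$. Tensor products commute with filtered colimits, so
\[ L \tensor_{\fK} R \;=\; \dirlim_i \bigl(L_i \tensor_{\fK} R\bigr). \]

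The central step is to verify that each $L_i \tensor_{\fK} R$ is a local \etale $R$-algebra. \'Etaleness is preserved under base change, and since $L_i/\fK$ is finite the ring $L_i \tensor_{\fK} R$ is a finite (hence Noetherian) $R$-module. For locality, observe that $R \to L_i \tensor_{\fK} R$ is integral because $L_i$ is algebraic over $\fK$, so every maximal ideal of $L_i \tensor_{\fK} R$ contracts to $\fm$; the fibre $L_i \tensor_{\fK} \fK \isom L_i$ is a field, producing exactly one such maximal ideal. Hence $L_i \tensor_{\fK} R$ is local, and the structure map is a local \etale morphism in the sense of Theorem \ref{Thm: LocalEtale}.

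Having recognised $L \tensor_{\fK} R$ as a filtered colimit of local \etale $R$-algebras, the corollary immediately preceding (whose application presupposes the G-ring hypothesis tacitly operative throughout the section) yields $\height(R) = \height(L \tensor_{\fK} R)$. The main obstacle, which is modest, is the locality verification for each $L_i \tensor_{\fK} R$; once that integral-extension argument is in place, the result follows formally from the direct-limit and \etale-invariance statements already proved.
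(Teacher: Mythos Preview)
Your proof is correct and follows essentially the same route as the paper's: write $L = \dirlim L_i$ over finite separable subextensions, base-change to get $L \tensor_{\fK} R = \dirlim (L_i \tensor_{\fK} R)$, observe each $L_i \tensor_{\fK} R$ is a local \etale $R$-algebra, and invoke the preceding corollary. If anything, you are more careful than the paper, which asserts locality of $L_i \tensor_{\fK} R$ without justification; your integral-extension and fibre argument fills that gap cleanly.
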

\begin{proof}
    $L = \dirlim L_i$ where each $L_i$ is a finite separable, hence \etale, extension of $k$. It follows then $R \to L_i \tensor_{k} R$ is a local \etale extension, as \etale extensions are preserved under base change. Thus, 
    $$L \tensor_{k} R \isom (\dirlim L_i) \tensor_{k} R \isom \dirlim (L_i \tensor_{k} R)$$
    is a direct limit of local \etale $R$-algebras, and we cite the preceeding corollary. 
\end{proof}
As $\height(R) = \height(\wh{R})$ and $\wh{R}$ is naturally an algebra over a coefficient field $\fK$, corollary \ref{cor: sepExtension} shows that $\height(R) = \height(L \tensor_\fK \wh{R})$ for $L$ any algebraic separable extension of $\fK$. \\

We can generalize theorem \ref{Thm: LocalEtale} outside of the local case, but will require that $R \to S$ be module-finite.
\begin{theorem}
\label{Thm: FiniteEtale}
    Let $\loc$ be a local G-ring and $R \to S$ be a finite \etale extension. Then $S$ is a G-ring and $\height(R) = \height(S)$. 
\end{theorem}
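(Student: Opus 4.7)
The plan is to reduce the global statement to the local one (Theorem \ref{Thm: LocalEtale}) by completing $S$ at $\fm$ and exploiting the product decomposition of finite algebras over the Henselian ring $\wh{R}$, together with the fact that $W_n$ distributes over finite products.

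First I would verify that $S$ is a G-ring: $R \to S$ is finite, hence of finite type, and finite-type algebras over G-rings are G-rings by \cite[07PV]{stacks-project}. Since $S$ is module-finite over the local ring $R$, it is semilocal; let $\fn_1, \ldots, \fn_k$ denote its maximal ideals, each contracting to $\fm$. Each localization $S_{\fn_i}$ is a local G-ring, and $R \to S_{\fn_i}$ is a local \etale extension (localization preserves \etale). By Theorem \ref{Thm: LocalEtale}, $\height(S_{\fn_i}) = \height(R)$ for every $i$.

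Next I would complete. Since $S$ is finite over $R$, the $\fm$-adic completion satisfies $\wh{S} = S \otimes_R \wh{R}$, which is finite \etale over the complete local (hence Henselian) ring $\wh{R}$. The standard structure theorem for finite algebras over a Henselian local ring yields a decomposition $\wh{S} = \prod_{i=1}^k T_i$ with $T_i \isom \wh{S_{\fn_i}}$ complete local and finite \etale over $\wh{R}$. Corollary \ref{cor: HeightCompletes} then gives $\height(T_i) = \height(S_{\fn_i}) = \height(R)$. The key compatibility is that $W_n$ commutes with finite products, since the Witt polynomials act coordinatewise; consequently $W_n(\wh{S}) \isom \prod_i W_n(T_i)$, and similarly the formation of $\ov{W_n}$, Frobenius pushforward, and hence $Q_{-,n}$ and $\Phi_{-,n}$ all factor as products. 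A map between $\prod_i W_n(T_i)$-modules (which themselves decompose via orthogonal idempotents) is pure if and only if each of its component maps is pure over the corresponding $W_n(T_i)$. Thus $\height(\wh{S}) = \max_i \height(T_i) = \height(R)$.

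Finally I would assemble the inequalities: the faithfully flat (hence pure) map $R \to S$ gives $\height(R) \leq \height(S)$ by Lemma \ref{lem: PureExtn}, while $S \to \wh{S} = S \otimes_R \wh{R}$, being the base change of the faithfully flat $R \to \wh{R}$, is faithfully flat and hence pure, so $\height(S) \leq \height(\wh{S}) = \height(R)$. Equality then follows. The main obstacle I expect is the product-decomposition step: one must check carefully that $\wh{S}$ actually splits as a product of the $\wh{S_{\fn_i}}$, and that Witt vectors, Frobenius pushforward, mod $p$ reduction, and purity all interact with this decomposition compatibly. Once these compatibilities are laid out, the rest of the argument is essentially an assembly of the earlier results.
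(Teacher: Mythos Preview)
Your proposal is correct and follows essentially the same strategy as the paper: complete, decompose $\wh{R}\otimes_R S$ as a product of local rings over the Henselian base $\wh{R}$, apply Theorem~\ref{Thm: LocalEtale} to each factor, and reassemble. The paper applies Theorem~\ref{Thm: LocalEtale} directly to $\wh{R}\to S_i$ and finishes in one line by invoking that purity is checkable on $\MaxSpec$, whereas you first apply it to $R\to S_{\fn_i}$ and then close with the sandwich $\height(R)\leq\height(S)\leq\height(\wh{S})$; these are cosmetic rearrangements of the same argument, and your explicit check that $W_n$, $Q_{-,n}$, and purity distribute over the product is exactly what underlies the paper's ``purity is local'' step.
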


\begin{proof}
    First we base change to the completion $\wh{R}$, where the corresponding map $\wh{R} \to \wh{R} \tensor_R S$ remains \etale. Via \cite[04GH]{stacks-project} we can decompose $\wh{R} \tensor_R S \isom S_1 \times \dots \times S_\ell$ where each $S_i$ is local (with maximal ideal denoted $\fn_i$) and finite over $\wh{R}$. From this description, it easy to see that $S_{\fn_i} \isom S_i$ for each $i$, and as $\fn_i$ lies over $\fm$, the composition $\wh{R} \to S \to S_{\fn_i} \isom S_i$ is a local map. As localization is \etale it follows that $\wh{R} \to S_i$ is \etale local. Thus via corollary \ref{cor: HeightCompletes} and theorem \ref{Thm: LocalEtale}, $\height(R) = \height(\wh{R}) = \height(S_i)$ $\forall i \leq \ell$. As purity is a local property that can be checked at the level of max Spec, we see that
    $$\height(S) = \sup_{i \leq \ell}\height(S_i) = \height(R)$$
\end{proof}
\section{Conjecture on The $\Gamma$-Construction}
Unfortunately, separability in corollary \ref{cor: sepExtension} is a required assumption.
\begin{remark}
    Consider an augmentation of \cite[Example 7.13]{kawakami2022fedder}: 
    $$R = \frac{\FF_2(S,T)[[x,y,z]]}{(Sx^2 + Ty^2 + z^2)}$$ 
    A computation in \cite{kawakami2022fedder} shows that this can realized as the general fiber of a quasi-$F$-split height $2$ fibration, and is hence also quasi-$F$-split. A Fedder's criterion computation shows this is not $F$-split, so $2 \leq \height(R) < \infty$.  However, the base change to the algebraic (or even perfect) closure $\ov{\FF_2(S,T)} \tensor_{\FF_2(S,T)} R$ is non-reduced, as $(\sqrt{S} x + \sqrt{T} y + z)^2 = 0$. Thus, by corollary \ref{cor: QFPureImpliesReduced}, $\ov{\FF_2(S,T)} \tensor_{\FF_2(S,T)} R$ is not quasi-$F$-split. 
    \label{ex: InsepExtnFails}
\end{remark}
Thus, base change under inseparable extensions can fail spectacularly, even over $F$-finite irreducible hypersurfaces. This is in contrast to the standard notion of $F$-purity, which is stable under arbitrary base change \cite[proposition 1.11]{FeddersCrit}. You can, however, perform arbitrary base change under specific circumstances, at least over $F$-finite fields. See Theorem 5.13 and Proposition 6.10 in \cite{kawakami2022fedder} for examples. \\

That is not to say that quasi-$F$-pure height is not stable under any inseparable base change. In the example above, the $p$-base of $\FF_2(S,T)$ is $\Lambda := \{1,S,T\}$. If we only adjoin roots for $S$ but not $T$, then the extension $L = \bigcup_{i\in \NN}\FF_2(S^{1/2^i},T)$ is a purely inseparable extension of $\FF_2(S,T)$, but $L \tensor_{\FF_2(S,T)} R$ remains reduced and quasi-$F$-split. \\

This suggests that more careful approaches when base changing to an $F$-finite field may be useful. In \cite{HochsterHuneke94}, Hochster and Huneke developed machinery to reduce questions about complete local rings to questions about $F$-finite rings called the $\Gamma$-construction. Though initially constructed to show the existence of test elements in essentially of finite type algebras over excellent local rings, the $\Gamma$-construction has found wide use in extending results on $F$-finite rings to the complete, local, and non-$F$-finite setting. See \cite{EnescuHochsterFrobeniusstructureoflocalcohomology}, \cite{Ma_FinitenessPropertiesofLocalCohomologyforFPureLocalRings}, and \cite{Lyu_TheGammaConstructionAndPermanencePropertiesofTheRelativeFRationalSignature} for more examples of such extensions. \\

For a complete local ring $\locr$, one can take a purely inseparable extension $\fK^\Gamma/\fK$ which adjoins, for all $e > 0$, all $p^e$th-roots for all but finitely elements $\Gamma$ of the $p$-base of $\fK$. Remarkably $R^\Gamma := \fK^{\Gamma} \ctensor_\fK R$ is not only $F$-finite, but preserves nearly every singularity type of $R$ if $\Gamma$ is chosen sufficiently small, but still cofinite in the $p$-base $\Lambda$ (\cite[Theorem 3.4]{Murayama_TheGammaConstructionAndAsymptoticInvariantsofLineBundlesoverArbitraryFields} provides an overview for such results). We conjecture that the $\Gamma$-construction is also capable of preserving quasi-$F$-pure/quasi-$F$-split height.
\begin{conjecture*}
    Let $\locr$ be a complete local Noetherian ring of characteristic $p$. For sufficiently small $\Gamma \subset \Lambda$, $\height(R) = \height(R^\Gamma)$. 
\end{conjecture*}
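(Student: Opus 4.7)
The plan is to establish the two inequalities $\height(R) \leq \height(R^\Gamma)$ and $\height(R^\Gamma) \leq \height(R)$ separately. The first inequality holds without any smallness hypothesis on $\Gamma$: for any cofinite $\Gamma \subset \Lambda$, the $\Gamma$-construction produces a faithfully flat (hence pure) extension $R \to R^\Gamma$, so Lemma \ref{lem: PureExtn} applies directly. The role of smallness is to ensure the reverse inequality has a chance of holding; specifically, for $\Gamma$ sufficiently small and cofinite in $\Lambda$, the extension $R \to R^\Gamma$ is not merely flat but geometrically regular. My goal would then be to prove the broader statement that \emph{if $R \to S$ is a geometrically regular extension of complete local Noetherian rings of characteristic $p$, then $\height(S) \leq \height(R)$}, from which the conjecture follows by specialization to $S = R^\Gamma$.

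The most natural attack would mirror the strategy already used in the paper for promoting results from smooth extensions to geometrically regular ones. By Popescu desingularization, write $S = \dirlim R_i$ as a filtered colimit of smooth $R$-algebras. Theorem \ref{Thm: DirectLimit} then reduces the problem to showing $\height(R_i) \leq \height(R)$ for each smooth $R$-algebra $R_i$. Factoring each such smooth map as a polynomial extension $R \to R[x_1,\dots,x_d]$ followed by an étale extension $R[x_1,\dots,x_d] \to R_i$, the étale step is handled by Theorems \ref{Thm: LocalEtale} and \ref{Thm: FiniteEtale} after localizing $R_i$ at the maximal ideals lying over $\fm$ and using that height is controlled by local behavior. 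The main obstacle is the polynomial ring step: one needs to establish that quasi-$F$-pure height does not increase under $R \to R[x_1,\dots,x_d]$, but this calculation exits the local setting where the injective-hull characterization of purity used throughout the paper is directly available. A successful resolution will likely require formulating quasi-$F$-purity for non-local Noetherian rings and proving that it can be tested locally at each maximal ideal, analogous to the classical situation for $F$-purity.

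An alternative, more direct approach would begin from an $n$-quasi-$F$-splitting of $R$ (equivalent to $n$-quasi-$F$-purity by Theorem \ref{Thm: QFSplitIffQFPure}) and try to base-change the $W_n(R)$-linear splitting $F_*W_n(R) \to R$ along the pure map $W_n(R) \to W_n(R^\Gamma)$ guaranteed by Corollary \ref{cor: GRingImpliesPureWitt}. The obstruction along this route is that the relative Witt Frobenius $F_*W_n(R) \otimes_{W_n(R)} W_n(R^\Gamma) \to F_*W_n(R^\Gamma)$ is known to be an isomorphism only for étale extensions (Lemma \ref{lem: RelWittFrobeniusIsomorphism}); for geometrically regular but non-étale extensions this map has a defect that must be controlled, which seems to require a Witt-level analog of Popescu's theorem. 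Both approaches therefore converge on essentially the same technical bottleneck: understanding how $W_n(-)$ interacts with smooth or geometrically regular extensions beyond the purity statement already established, and in particular producing a workable comparison between $F_*W_n(R) \otimes_{W_n(R)} W_n(R^\Gamma)$ and $F_*W_n(R^\Gamma)$ in the non-étale regime.
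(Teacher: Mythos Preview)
The statement you are attempting to prove is explicitly a \emph{conjecture} in the paper; the paper offers no proof and presents it as an open problem in the final section. So there is no proof to compare your proposal against.

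That said, your proposed strategy contains a genuine error. You assert that for $\Gamma$ sufficiently small and cofinite in $\Lambda$, the extension $R \to R^\Gamma$ is geometrically regular. This is false. The extension $\fK \to \fK^\Gamma$ is purely inseparable by construction (one is adjoining $p$-power roots of elements of the $p$-base), and hence so is $R \to R^\Gamma$. A flat local map whose residue field extension is purely inseparable and nontrivial can never have geometrically regular fibers: geometrically regular implies geometrically reduced, which forces separability of the residue field extension. Consequently your plan to invoke Popescu desingularization on $R \to R^\Gamma$ cannot get off the ground, and the entire first approach collapses.

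Your second approach is more honest about the obstruction but does not resolve it either. The failure of the relative Witt Frobenius to be an isomorphism outside the \etale case is precisely the difficulty, and it is not mitigated by any smallness of $\Gamma$: the extension $R \to R^\Gamma$ is maximally far from \etale in the sense that it is purely inseparable. The paper's own Remark~\ref{ex: InsepExtnFails} illustrates exactly why inseparable base change is dangerous for quasi-$F$-purity, and the $\Gamma$-construction lives entirely in that inseparable regime. What makes the conjecture plausible is not any regularity of $R \to R^\Gamma$, but rather the empirical fact (documented in the references the paper cites) that for $\Gamma$ small enough the $\Gamma$-construction tends to preserve singularity invariants; a proof would likely need to engage directly with that mechanism rather than route through Popescu.
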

If true, this would provide a systematic way to construct an $F$-finite reduction of any quasi-$F$-split ring such that their height, along with most other properties, are preserved. 

\bibliographystyle{amsalpha}
\bibliography{references}
\end{document}